\newif\ifdetails
\newcommand{\DETAIL}[1]%
{\ifdetails\par\fbox{\begin{minipage}{0.9\linewidth}\textit{Detail:}
      #1\end{minipage}}\par\fi}
\newcommand{\TODO}[1]%
{\ifdetails\par\fbox{\begin{minipage}{0.9\linewidth}\textbf{TODO:}
      #1\end{minipage}}\par\fi}
\newtheorem{lemma}{Lemma}
\newtheorem{theorem}[lemma]{Theorem}
\newtheorem{corollary}[lemma]{Corollary}
\newtheorem{question}{Question}
\newcommand{\Crt}{\operatorname{crt}}
\newcommand{\Cr}{\operatorname{cr}}
\newcommand{\subtangpicture}[1][]{
\begin{center}
\begin{tikzpicture}[#1]
        \node[fill=black,circle,inner sep=2pt]  at (0,0) {};
        \draw[line width=2pt] (2,0)--(1,.5);
        \draw (0,0)--(2,-1);
         \draw (0,0)--(2,1);       
       \draw (6,0)--(4,-1);
        \draw (4,1)--(6,0);        
        \draw[line width=2pt] (5,-.5)--(4,0);
        \node[fill=black,circle,inner sep=2pt]  at (6,0) {};
        \draw[dashed] (2,1)--(4,1);
        \draw[dashed] (2,0)--(4,0);
        \draw[dashed] (2,-1)--(4,-1);
        \node[fill=black,circle,inner sep=1pt]  at (1,.5) {};
        \node[fill=black,circle,inner sep=1pt]  at (5,-.5) {};        
        \node[fill=black,rectangle,inner sep=2pt] at (2,1) {};
        \node[fill=black,rectangle,inner sep=2pt] at (2,0) {};
        \node[fill=black,rectangle,inner sep=2pt] at (2,-1) {};       
        \node[fill=black,rectangle,inner sep=2pt] at (4,1) {};
        \node[fill=black,rectangle,inner sep=2pt] at (4,0) {};
        \node[fill=black,rectangle,inner sep=2pt] at (4,-1) {};                 
	\node at (.5,1) {$S$};
\end{tikzpicture}
\end{center}
}
\newcommand{\old}[1]{{}}
\title{A tanglegram Kuratowski theorem}
\author{\'Eva Czabarka, L\'aszl\'o A. Sz\'ekely }
\address{\'Eva Czabarka and L\'aszl\'o A. Sz\'ekely\\ Department of Mathematics \\ University of South Carolina \\ Columbia, SC 29208 \\ USA}
\email{\{czabarka,szekely\}@math.sc.edu }
\thanks{The second author was supported in part by the  NSF DMS,  grant numbers 1300547 and 1600811,  the third author was supported by the National
Research Foundation of South Africa, grant number 96236.}
\author{Stephan Wagner}
\address{Stephan Wagner\\ Department of Mathematical Sciences \\ Stellenbosch University \\ Private Bag X1, Matieland 7602 \\ South Africa}
\email{swagner@sun.ac.za}
\subjclass[2010]{Primary 05C10; secondary 05C05, 05C62,  92B10}
\keywords{trees, subtrees,  tanglegram, graph drawing, planarity, crossing number}
\begin{document}

\begin{abstract}
A tanglegram consists of two rooted binary  plane trees with the same number of leaves and a perfect matching between the two leaf sets. Tanglegrams are drawn with the leaves on two parallel lines, the trees on either side of the strip created by these lines, and the perfect matching inside the strip. If this can be done without any edges crossing, a tanglegram is called planar. We show that every non-planar tanglegram contains one of
two non-planar 4-leaf tanglegrams as induced subtanglegram, which parallels Kuratowski's Theorem.
\end{abstract}

\maketitle

\section{introduction}
Kuratowski's Theorem \cite{Kurat}, a cornerstone of graph theory, asserts that a graph is non-planar if and only if it contains a subdivision
of $K_{3,3} $ or $K_5$. This is not the only characterization of planarity: Wagner's Theorem \cite{wagner} asserts that a  
graph is non-planar if and only if 
$K_{3,3} $ or $K_5$ is a minor of the graph. Tanglegrams are a special kind of graph, consisting of two binary trees of the same size and a perfect matching joining the leaves, with restrictions on how they can be drawn.
Tanglegrams are well studied objects in phylogenetics and computer science. Planarity of tanglegrams is directly characterized by
Kuratowski's Theorem in terms of subgraphs, if the tanglegram is augmented by a certain edge (Lemma~\ref{lem:crt}). In this paper we provide a characterization
of planarity of tanglegrams not in terms of  its subgraphs, but in terms of other tanglegrams (Theorem~\ref{thm:main}). As tanglegrams are not widely
known objects, we immediately  turn to the technical definitions.   



\section{Tanglegrams}

A {\em plane binary tree} has a root vertex
assumed to be a common ancestor of all other vertices, and each vertex either has two children
(left and right) or no children. A vertex with no children is a  {\em leaf}, and a vertex with two
children is an {\em internal vertex}. Note that this definition allows a single-vertex tree that is considered as both root and leaf to be a rooted binary tree.
A plane binary tree is easy to draw  on one side 
of a line, without edge crossings, such that only the leaves of the tree are on the line.

\begin{figure}[htbp]
\begin{center}
\begin{tikzpicture}[scale=.67]
        \node[fill=black,circle,inner sep=1.5pt]  at (-1,0) {}; 
        \node[fill=black,circle,inner sep=1.5pt]  at (1,-1) {};
        \node[fill=black,circle,inner sep=1.5pt]  at (1,1) {};
        \node[fill=black,rectangle,inner sep=2pt]  at (2,-1.5) {};
        \node[fill=black,rectangle,inner sep=2pt]  at (2,-.5) {};
        \node[fill=black,rectangle,inner sep=2pt]  at (2,.5) {};
        \node[fill=black,rectangle,inner sep=2pt]  at (2,1.5) {};

	\draw (-1,0)--(2,1.5);
	\draw (-1,0)--(2,-1.5);
	\draw (1,-1)--(2,-.5);
	\draw (1,1)--(2,.5);

        \node[fill=black,rectangle,inner sep=2pt]  at (3,-1.5) {};
        \node[fill=black,rectangle,inner sep=2pt]  at (3,-.5) {};
        \node[fill=black,rectangle,inner sep=2pt]  at (3,.5) {};
        \node[fill=black,rectangle,inner sep=2pt]  at (3,1.5) {};
        \node[fill=black,circle,inner sep=1.5pt]  at (4,1) {};
        \node[fill=black,circle,inner sep=1.5pt]  at (5,.5) {};
        \node[fill=black,circle,inner sep=1.5pt]  at (6,0) {}; 

	\draw (6,0)--(3,1.5);
	\draw (6,0)--(3,-1.5);
	\draw (5,.5)--(3,-.5);
	\draw (4,1)--(3,.5);

	\draw [dashed] (2,-1.5)--(3,-1.5);
	\draw [dashed] (2,-.5)--(3,.5);
	\draw [dashed] (3,-.5)--(2,.5);
	\draw [dashed] (2,1.5)--(3,1.5);
	
	\node at (2,-1.9) {$a$};
	\node at (3,-1.9) {$a$};
         \node at (2,-.9)  {$b$};
         \node at (3,.1)  {$b$};
         \node at (3,-.9)  {$c$};
         \node at (2,.1)  {$c$};
         \node at  (2,1.1) {$d$};
         \node at (3,1.1)    {$d$};
	\node at (2.5,-2.5) {original layout};
	\node at (-1,0.3) {$r$};
	\node at (6,0.3) {$\rho$};
         
        \node[fill=black,circle,inner sep=1.5pt]  at (7,0) {}; 
        \node[fill=black,circle,inner sep=1.5pt]  at (9,-1) {};
        \node[fill=black,circle,inner sep=1.5pt]  at (9,1) {};
        \node[fill=black,rectangle,inner sep=2pt]  at (10,-1.5) {};
        \node[fill=black,rectangle,inner sep=2pt]  at (10,-.5) {};
        \node[fill=black,rectangle,inner sep=2pt]  at (10,.5) {};
        \node[fill=black,rectangle,inner sep=2pt]  at (10,1.5) {};

	\draw (7,0)--(10,1.5);
	\draw (7,0)--(10,-1.5);
	\draw (9,-1)--(10,-.5);
	\draw (9,1)--(10,.5);

        \node[fill=black,rectangle,inner sep=2pt]  at (11,-1.5) {};
        \node[fill=black,rectangle,inner sep=2pt]  at (11,-.5) {};
        \node[fill=black,rectangle,inner sep=2pt]  at (11,.5) {};
        \node[fill=black,rectangle,inner sep=2pt]  at (11,1.5) {};
        \node[fill=black,circle,inner sep=1.5pt]  at (12,0) {};
        \node[fill=black,circle,inner sep=1.5pt]  at (13,-.5) {};
        \node[fill=black,circle,inner sep=1.5pt]  at (14,0) {}; 

	\draw (14,0)--(11,1.5);
	\draw (14,0)--(11,-1.5);
	\draw (13,-.5)--(11,.5);
	\draw (12,0)--(11,-.5);

	\draw [dashed] (10,-1.5)--(11,1.5);
	\draw [dashed] (10,-.5)--(11,-.5);
	\draw [dashed] (10,.5)--(11,-1.5);
	\draw [dashed] (10,1.5)--(11,.5);
	
	\node at (10,-1.9) {$a$};
	\node at (10,-.9) {$b$};
	\node at (10,.1) {$c$};
	\node at (10,1.1) {$d$};
	\node at (11,1.1) {$a$};
	\node at (11,0.1) {$d$};
	\node at (11,-.9) {$b$};
	\node at (11,-1.9) {$c$};
	\node at (10.5,-2.5) {after switching at $\rho$};
	\node at (7,0.3) {$r$};
	\node at (14,0.3) {$\rho$};

        \node[fill=black,circle,inner sep=1.5pt]  at (15,0) {};
        \node[fill=black,circle,inner sep=1.5pt]  at (17,-1) {};
        \node[fill=black,circle,inner sep=1.5pt]  at (17,1) {};
        \node[fill=black,rectangle,inner sep=2pt]  at (18,-1.5) {};
        \node[fill=black,rectangle,inner sep=2pt]  at (18,-.5) {};
        \node[fill=black,rectangle,inner sep=2pt]  at (18,.5) {};
        \node[fill=black,rectangle,inner sep=2pt]  at (18,1.5) {};

	\draw (15,0)--(18,1.5);
	\draw (15,0)--(18,-1.5);
	\draw (17,-1)--(18,-.5);
	\draw (17,1)--(18,.5);

        \node[fill=black,rectangle,inner sep=2pt]  at (19,-1.5) {};
        \node[fill=black,rectangle,inner sep=2pt]  at (19,-.5) {};
        \node[fill=black,rectangle,inner sep=2pt]  at (19,.5) {};
        \node[fill=black,rectangle,inner sep=2pt]  at (19,1.5) {};
        \node[fill=black,circle,inner sep=1.5pt]  at (20,1) {};
        \node[fill=black,circle,inner sep=1.5pt]  at (21,.5) {};
        \node[fill=black,circle,inner sep=1.5pt]  at (22,0) {};

	\draw (22,0)--(19,1.5);
	\draw (22,0)--(19,-1.5);
	\draw (21,.5)--(19,-.5);
	\draw (20,1)--(19,.5);

	\draw [dashed] (18,-1.5)--(19,1.5); 
	\draw [dashed] (18,-.5)--(19,-.5); 
	\draw [dashed] (19,.5)--(18,.5);
	\draw [dashed] (18,1.5)--(19,-1.5);
	
	\node at (18,-1.9) {$d$};
	\node at (19,-1.9) {$a$};
         \node at (19,-.9)  {$c$};
         \node at (19,.1)  {$b$};
         \node at (18,-.9)  {$c$};
         \node at (18,.1)  {$b$};
         \node at  (18,1.1) {$a$};
         \node at (19,1.1)    {$d$};
         \node at (18.5,-2.5) {after mirroring at $r$};
         \node at (15,0.3) {$r$};
	\node at (22,0.3) {$\rho$};

\end{tikzpicture}
\end{center}
\caption{Results of a switch and a mirror  operation. 
}  \label{fig:sw&m}
\end{figure}
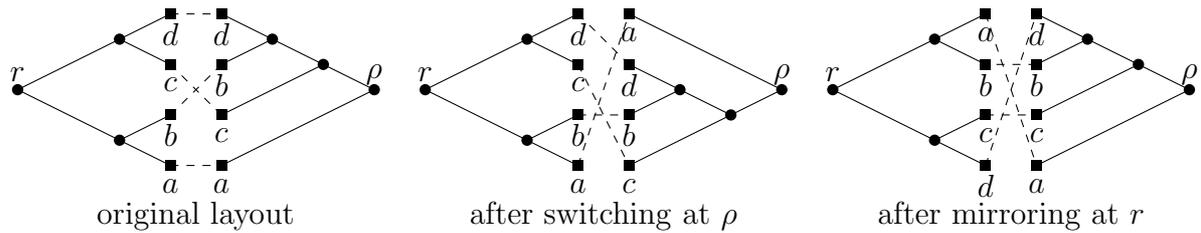

\begin{figure}[h!]
\begin{center}
\begin{tikzpicture}[scale = 0.4]
\newcommand{\treeb}[3]{\coordinate (v1) at (#1,#3); \coordinate (v2) at (#1+#2,#3+0.5);\coordinate (v3) at (#1+2*#2,#3+1);\coordinate (v4) at (#1+3*#2,#3+1.5);\coordinate (v5) at (#1+3*#2,#3+0.5);\coordinate (v6) at (#1+3*#2,#3-0.5);\coordinate (v7) at (#1+3*#2,#3-1.5);\draw[fill] (v1) circle (.5ex);\draw[fill] (v4) circle (.5ex);\draw[fill] (v5) circle (.5ex);\draw[fill] (v6) circle (.5ex);\draw[fill] (v7) circle (.5ex);\draw (v1) -- (v4);\draw (v3) -- (v5);\draw (v2) -- (v6);\draw (v1) -- (v7);}
\newcommand{\treec}[3]{\coordinate (v1) at (#1,#3); \coordinate (v2) at (#1+2*#2,#3-1);\coordinate (v3) at (#1+2*#2,#3+1);\coordinate (v4) at (#1+3*#2,#3+1.5);\coordinate (v5) at (#1+3*#2,#3+0.5);\coordinate (v6) at (#1+3*#2,#3-0.5);\coordinate (v7) at (#1+3*#2,#3-1.5);\draw[fill] (v1) circle (.5ex);\draw[fill] (v4) circle (.5ex);\draw[fill] (v5) circle (.5ex);\draw[fill] (v6) circle (.5ex);\draw[fill] (v7) circle (.5ex);\draw (v1) -- (v4);\draw (v3) -- (v5);\draw (v2) -- (v6);\draw (v1) -- (v7);}
\newcommand{\tangleb}[7]{\treeb{#1}{#2}{#7} \treeb{#1+8*#2}{-#2}{#7} \draw[dashed] (#1 + 3*#2,#7+1.5) -- (#1 + 5*#2,#7+2.5-#3); \draw[dashed] (#1 + 3*#2,#7+0.5) -- (#1 + 5*#2,#7+2.5-#4); \draw[dashed] (#1 + 3*#2,#7-0.5) -- (#1 + 5*#2,#7+2.5-#5); \draw[dashed] (#1 + 3*#2,#7-1.5) -- (#1 + 5*#2,#7+2.5-#6);}
\newcommand{\tanglec}[7]{\treeb{#1}{#2}{#7} \treec{#1+8*#2}{-#2}{#7} \draw[dashed] (#1 + 3*#2,#7+1.5) -- (#1 + 5*#2,#7+2.5-#3); \draw[dashed] (#1 + 3*#2,#7+0.5) -- (#1 + 5*#2,#7+2.5-#4); \draw[dashed] (#1 + 3*#2,#7-0.5) -- (#1 + 5*#2,#7+2.5-#5); \draw[dashed] (#1 + 3*#2,#7-1.5) -- (#1 + 5*#2,#7+2.5-#6);}
\newcommand{\tangled}[7]{\treec{#1}{#2}{#7} \treeb{#1+8*#2}{-#2}{#7} \draw[dashed] (#1 + 3*#2,#7+1.5) -- (#1 + 5*#2,#7+2.5-#3); \draw[dashed] (#1 + 3*#2,#7+0.5) -- (#1 + 5*#2,#7+2.5-#4); \draw[dashed] (#1 + 3*#2,#7-0.5) -- (#1 + 5*#2,#7+2.5-#5); \draw[dashed] (#1 + 3*#2,#7-1.5) -- (#1 + 5*#2,#7+2.5-#6);}
\newcommand{\tanglee}[7]{\treec{#1}{#2}{#7} \treec{#1+8*#2}{-#2}{#7} \draw[dashed] (#1 + 3*#2,#7+1.5) -- (#1 + 5*#2,#7+2.5-#3); \draw[dashed] (#1 + 3*#2,#7+0.5) -- (#1 + 5*#2,#7+2.5-#4); \draw[dashed] (#1 + 3*#2,#7-0.5) -- (#1 + 5*#2,#7+2.5-#5); \draw[dashed] (#1 + 3*#2,#7-1.5) -- (#1 + 5*#2,#7+2.5-#6);}
\tangleb {0} {0.7} 1 2 3 4 {0}
\tangleb {7} {0.7} 1 2 4 3 {0}
\tangleb {14} {0.7} 1 3 2 4 {0}
\tangleb {21} {0.7} 1 3 4 2 {0}
\tangleb {28} {0.7} 1 4 2 3 {0}
\tangleb {0} {0.7} 1 4 3 2 {-5}
\tangleb {7} {0.7} 3 4 1 2 {-5}
\tanglec {14} {0.7} 1 2 3 4 {-5}
\tanglec {21} {0.7} 1 3 2 4 {-5}
\tangled {28} {0.7} 1 2 3 4 {-5}
\tangled {7} {0.7} 1 3 2 4 {-10}
\tanglee {14} {0.7} 1 2 3 4 {-10}
\tanglee {21} {0.7} 1 3 2 4 {-10}
\node at (3,-2.5){$_{\text{No.\ } 1}$};
\node at (10,-2.5){$_{\text{No.\ } 2}$};
\node at (17,-2.5){$_{\text{No.\ } 3}$};
\node at (24,-2.5){$_{\text{No.\ } 4}$};
\node at (31,-2.5){$_{\text{No.\ } 5}$};
\node at (3,-7.5){$_{\text{No.\ } 6}$};
\node at (10,-7.5){$_{\text{No.\ } 7}$};
\node at (17,-7.5){$_{\text{No.\ } 8}$};
\node at (24,-7.5){$_{\text{No.\ } 9}$};
\node at (31,-7.5){$_{\text{No.\ } 10}$};
\node at (10,-12.5){$_{\text{No.\ } 11}$};
\node at (17,-12.5){$_{\text{No.\ } 12}$};
\node at (24,-12.5){$_{\text{No.\ } 13}$};
\end{tikzpicture}
\end{center}
\caption{The $13$ tanglegrams of size $4$ from \cite{KonvWag}.}\label{fig:t4}
\end{figure}

A  {\em tanglegram layout} $(L,R,\sigma)$ consists of a left plane binary tree $L$  with root $r$ drawn in the halfplane $x\leq 0$, having
its leaves on the $x=0$ line,
a right plane binary tree $R$ with root $\rho$, drawn in the halfplane $x\geq 1$, having
its leaves on the $x=1$ line,    each with $n$ leaves, 
and a perfect matching  $\sigma$ between their leaves drawn in straight line segments.
 We treat tanglegram layouts
combinatorially, and understand them as  ordered triplets $(L,R,\sigma)$.
A {\em switch} on the tanglegram layout $(L,R,\sigma)$ is the following operation: select an internal vertex $v$ of one of the two trees $L$ and $R$. Vertex $v$  in $L$ has an up-subtree $L_u$ and a down-subtree $L_d$, with leaf sets $L(L_u)$ and $L(L_d)$ (or 
vertex $v$  in $R$ has an up-subtree $R_u$ and a down-subtree $R_d$, with leaf sets $L(R_u)$ and $L(R_d)$).
In the first case  interchange $L_u$ and 
$L_d$ such that on the line $x=0$  the order of two leaves changes precisely when one is in $L(L_u)$  and the other is in $L(L_d)$. In the 
second  case  interchange $R_u$ and 
$R_d$ such that on the line $x=1$  the order of two leaves changes precisely when one is in $L(R_u)$  and the other is in $L(R_d)$.
The edges of the matching move with the leaves that they connect during the switch. This is also illustrated in Figure~\ref{fig:sw&m}. 

Two layouts represent the same tanglegram if a sequence of switches  moves one layout into the other. For an internal vertex $v$, one may also take 
the {\em mirror image} of the subtree rooted at $v$. This is called the {\em mirror operation} at vertex $v$, which is illustrated in Figure~\ref{fig:sw&m}.
Matching edges still connect the corresponding leaves. As the mirror operation can be obtained by doing a sequence of switch operations from $v$ down
towards the leaves, mirror operations do not change the tanglegram. Tanglegrams partition the set of tanglegram layouts, or equivalently a tanglegram can be seen as an equivalence class of tanglegram layouts.
Note that interchanging $L$ and $R$ is not allowed and may result in a different tanglegram.  

The {\em size} of a tanglegram is the number of leaves in $L$ (or $R$) in any of its layouts.
Billey, Konvalinka, and Matsen \cite{billey} considered the enumeration problem for tanglegrams: they obtained an explicit formula for the number $t_n$ of tanglegrams with $n$ leaves on each side. The counting sequence starts
$$1, 1, 2, 13, 114, 1509, 25595, 535753, 13305590, 382728552, \ldots.$$
Figure~\ref{fig:t4} illustrates the fourth term in this sequence. The asymptotic formula
$$t_n \sim n! \cdot \frac{e^{1/8} 4^{n-1}}{\pi n^3}$$
was derived in \cite{billey} as well, and a number of questions on the shape of random tanglegrams were asked. Those were answered in \cite{KonvWag} by means of a strong structure theorem.

\section{Tanglegram crossing number and planarity of tanglegrams}

The crossing number of a tanglegram layout is the number of crossing pairs of matching edges. 
The crossing number of a tanglegram layout does not depend on details of the drawing, such as the exact positions
of leaves on the vertical lines, just on the rankings of the  matched leaves in the  linear orders of leaves on 
the lines $x=0$ and $x=1$. This fact justifies the combinatorial treatment
of tanglegram layouts for studying crossings.

 It is desirable to draw a tanglegram with the {\em least possible number of crossings}, which is known as the 
 Tanglegram Layout Problem \cite{St.John}. This problem is NP-hard \cite{fernau}. 
The {\em (tanglegram) crossing number}  $\Crt(T)$ of a tanglegram $T$  is defined as the minimum number of crossings among its layouts. 

Tanglegrams play a major role in phylogenetics, especially in the theory of cospeciation. The first binary tree is the phylogenetic tree of hosts, while the second binary tree is the phylogenetic tree of their parasites, e.g. gopher and louse 
\cite{HafnerNadler}. The matching connects the host with its parasite.
 The tanglegram  crossing number  has been related to the  number of times  parasites switched hosts \cite{HafnerNadler},
or, working with gene trees instead of phylogenetic trees, to the 
number of horizontal gene transfers (\cite{Burt}, pp. 204--206).

A tanglegram is {\em planar} if it has zero tanglegram crossing number; in other words, if it has a layout without crossing matching edges. Otherwise it is called {\em non-planar}. In an earlier paper  \cite{induci} we showed  that the tanglegram crossing number
of a randomly and uniformly selected tanglegram of size $n$ is $\Theta(n^2)$ with high probability, i.e. as large as it can be within a constant
multiplicative factor. As one would therefore expect, the number of planar tanglegrams of size $n$ grows much more slowly than the total number of tanglegrams. The counting sequence $p_n$ starts
$$1, 1, 2, 11, 76, 649, 6173, 63429, 688898, 7808246, \ldots,$$
and an asymptotic formula of the form
$$p_n \sim A \cdot n^{-3} \cdot B^n,$$
where $A$ and $B$ are constants, holds \cite{RRW}.

Recall \cite{measure} that 
a {\em  drawing} of a graph $G$ in the plane places the
vertices of $G$ on distinct points in the plane and then, for every edge $uv$ in $G$, draws
a continuous simple curve in the plane connecting the two points corresponding to
$u$ and $v$, in such a way that no curve has a vertex point as an internal point.
The {\em crossing number} $\Cr(G)$ of a graph $G$ is the minimum number of intersection
points among the interiors of the curves representing edges, over all possible
drawings of the graph, where no three edges may have a common interior point.

Note that the (graph) crossing number $\Cr(T)$ of a tanglegram $T$ is less or equal to  the (tanglegram) crossing number  $\Crt(T)$  of $T$, since the tanglegram layout is more restrictive than the
graph drawing. The following lemma is essentially taken from \cite{induci}:
\begin{lemma}\label{lem:crt} 
Assume that a tanglegram $T$ is represented by the layout $(L,R,\sigma)$, and let the roots of $L$ and $R$ be $r$ and $\rho$. Let $T^*$ 
denote the graph in which the underlying graph of $T$ (consisting of the two binary trees and the matching edges)
is augmented by the edge $r\rho$. Then the following facts are equivalent:
\begin{enumerate}
\item $\Crt(T)\geq 1$,
\item $\Cr(T^*)\geq 1$,
\item $T^*$ contains a subdivision of $K_{3,3}$.
\end{enumerate}
\end{lemma}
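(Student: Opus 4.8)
The plan is to prove the three-way equivalence by showing $(1)\Rightarrow(3)\Rightarrow(2)\Rightarrow(1)$, where the implications $(3)\Rightarrow(2)$ and $(2)\Rightarrow(1)$ are essentially immediate and the content is in $(1)\Rightarrow(3)$.

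\emph{The easy directions.} For $(3)\Rightarrow(2)$: a graph containing a subdivision of $K_{3,3}$ is non-planar by Kuratowski's Theorem, hence $\Cr(T^*)\ge 1$. For $(2)\Rightarrow(1)$: suppose $\Crt(T)=0$, so $T$ has a layout $(L,R,\sigma)$ with no matching crossings. In such a layout, $L$ sits in the halfplane $x\le 0$ with its leaves on $x=0$, $R$ sits in $x\ge 1$ with its leaves on $x=1$, the matching lives in the strip and is crossing-free, and the roots $r,\rho$ are the leftmost and rightmost points respectively; hence we may add the edge $r\rho$ as a curve that goes around the whole picture (say far below everything) without creating any crossing. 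This exhibits a planar drawing of $T^*$, so $\Cr(T^*)=0$, i.e. $(2)$ fails. Thus $\neg(1)\Rightarrow\neg(2)$.

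\emph{The main direction $(1)\Rightarrow(3)$.} Assume $\Crt(T)\ge 1$. I want to produce a $K_{3,3}$-subdivision in $T^*$. The natural approach is contrapositive combined with Kuratowski: if $T^*$ contains no $K_{3,3}$-subdivision, show $\Crt(T)=0$. First note $T^*$ cannot contain a $K_5$-subdivision either: $T^*$ is built from two trees plus a perfect matching plus one extra edge, so every vertex other than $r$ and $\rho$ has degree at most $3$ (internal tree vertices have degree $3$; leaves have degree $2$: one tree edge and one matching edge), and $r,\rho$ have degree at most $3$ as well after adding $r\rho$. A $K_5$-subdivision needs five branch vertices of degree $\ge 4$, which is impossible here. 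Hence by Kuratowski's Theorem, $T^*$ having no $K_{3,3}$-subdivision is equivalent to $T^*$ being planar. So it suffices to prove: if $T^*$ is planar, then $T$ has a crossing-free tanglegram layout.

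\emph{From planarity of $T^*$ to a tanglegram layout.} This is the step I expect to be the real obstacle, since it requires turning an arbitrary planar embedding into one of the restricted "two-sided" form. The idea: take a planar embedding of $T^*$. The edge $r\rho$ together with a path from $r$ to $\rho$ through the rest of the graph — or more cleanly, the fact that adding $r\rho$ keeps things planar — should force the two trees onto opposite sides. Concretely, consider the cycle space: in a planar embedding, delete the edge $r\rho$; the faces incident to $r\rho$ on its two sides give a natural bipartition of the remaining structure. One shows that $L$ (minus its leaves' matching edges) must lie entirely in the closed region on one side and $R$ on the other, because each tree is connected and attaches to the rest only through its leaves, which are matched across. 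Then one reads off the cyclic order of leaves of $L$ around the outer boundary and likewise for $R$; planarity forces these two cyclic orders to be "compatible" in exactly the sense that lets them be straightened onto two parallel vertical lines with the matching drawn monotonically across the strip without crossings — equivalently, the cyclic orders are reverses of each other under $\sigma$. The bookkeeping here (that a plane binary tree's leaf order along a line is realizable by switches, that the induced cyclic order from a planar embedding is one such order, and that matching these up yields zero crossings) is routine but must be done carefully; I would cite or reprove the standard fact that the leaf orders achievable by switch operations on a plane binary tree are exactly the ones consistent with the tree's recursive left/right structure, so any planar-embedding-induced order is among them. Assembling these pieces gives a crossing-free layout, completing $(1)\Rightarrow(3)$ and hence the equivalence. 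Since the lemma is attributed to \cite{induci}, I would at this point simply refer to that paper for the detailed verification and present the argument above as the outline.
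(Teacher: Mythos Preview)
Your proposal is correct and matches the paper's proof in all essential respects: both use the degree bound ($\le 3$ everywhere) to exclude a $K_5$-subdivision so that Kuratowski reduces to $K_{3,3}$, both treat the implication ``planar tanglegram layout $\Rightarrow$ $T^*$ planar'' by routing the extra edge $r\rho$ around the outside, and both identify the only nontrivial step as ``$T^*$ planar $\Rightarrow$ $T$ has a planar tanglegram layout,'' which each argument sketches rather than fully details. The only difference is cosmetic: you traverse the cycle $(1)\Rightarrow(3)\Rightarrow(2)\Rightarrow(1)$ while the paper does $(1)\Rightarrow(2)\Rightarrow(3)\Rightarrow(1)$, and for the hard step the paper invokes a straight-line drawing of $T^*$ and a curve threading the matching edges to read off the leaf order, whereas you propose using the edge $r\rho$ to separate the two trees---but these are two phrasings of the same idea.
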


\begin{proof}  (1)$\Rightarrow$(2). This is equivalent with $\Cr(T^*)=0\Rightarrow\Crt(T)=0$. If $\Cr(T^*)=0$ then $T^*$ can be drawn in the plane with
straight lines. This means both the left and right trees are drawn with straight lines, and we can draw a curve in the plane that goes through each matching edge once and no other edges of $T^*$ (the latter can be easily proven e.g. by induction on the number of leaves). Using the order of the leaves on this curve, one can easily obtain the desired planar layout of $T$.\\
(2)$\Rightarrow$(3) follows from Kuratowski's Theorem, as $T^*$ cannot contain a subdivision of $K_5$. This is because none of its vertices has degree greater than $3$. \\
(3)$\Rightarrow$(1) is proved by the contrapositive: if $T$ was to admit a planar tanglegram layout, then we could add the additional edge between $r$ and $\rho$, creating a planar drawing of the graph $T^*$.
\end{proof}

Note that a subdivision of a $K_{3,3}$ in the tanglegram $T^*$ may be such that the six vertices of $K_{3,3}$ are all in $L$ or all in $R$.

Now consider the tanglegrams $T_1$ and $T_2$ below,  No.~$6$ and $13$ in Figure~\ref{fig:t4}, each augmented with the extra edge connecting the roots. 
Figure \ref{fig:K3,3} shows a layout with one crossing for the tanglegrams  $T_1$ and $T_2$, and a subdivision of $K_{3,3}$ in the graphs
$T_1^*$ and $T_2^*$, showing $\Crt(T_1)=\Crt(T_2)=1$ in view of  Lemma~\ref{lem:crt}.

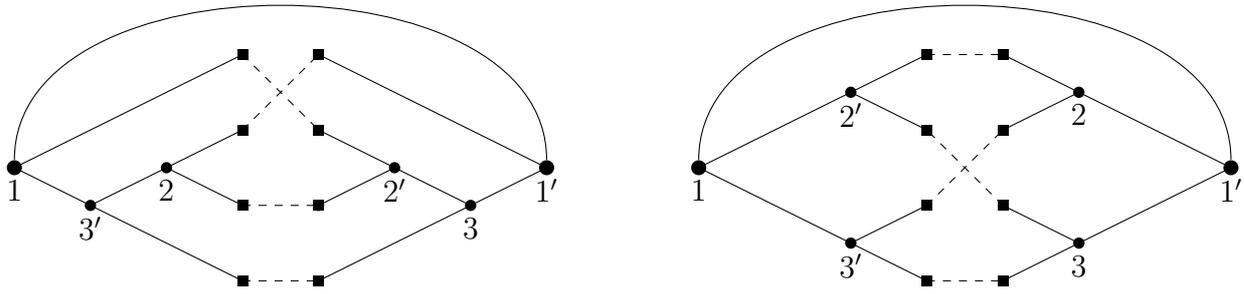
\begin{figure}[htbp]
\begin{center}
\begin{tikzpicture}[line/.style={-}]
  \node(A)[fill=black,circle,inner sep=2pt] at (8,0) {};
        \node[fill=black,circle,inner sep=1.5pt]  at (10,-1) {};
        \node[fill=black,circle,inner sep=1.5pt]  at (10,1) {};
        \node[fill=black,rectangle,inner sep=2pt]  at (11,-1.5) {};
        \node[fill=black,rectangle,inner sep=2pt]  at (11,-.5) {};
        \node[fill=black,rectangle,inner sep=2pt]  at (11,.5) {};
        \node[fill=black,rectangle,inner sep=2pt]  at (11,1.5) {};

	\draw (8,0)--(11,1.5);
	\draw (8,0)--(11,-1.5);
	\draw (10,-1)--(11,-.5);
	\draw (10,1)--(11,.5);

        \node[fill=black,rectangle,inner sep=2pt]  at (12,-1.5) {};
        \node[fill=black,rectangle,inner sep=2pt]  at (12,-.5) {};
        \node[fill=black,rectangle,inner sep=2pt]  at (12,.5) {};
        \node[fill=black,rectangle,inner sep=2pt]  at (12,1.5) {};
        \node[fill=black,circle,inner sep=1.5pt]  at (13,1) {};
        \node[fill=black,circle,inner sep=1.5pt]  at (13,-1) {};
        \node(B)[fill=black,circle,inner sep=2pt]  at (15,0) {};

	\draw (15,0)--(12,1.5);
	\draw (15,0)--(12,-1.5);
	\draw (13,-1)--(12,-.5);
	\draw (13,1)--(12,.5);
        \path[black,line,out=90,in=90] (A) edge (B);
        
        	\draw [dashed] (11,-1.5)--(12,-1.5);
	\draw [dashed] (11,-.5)--(12,.5);
	\draw [dashed] (12,-.5)--(11,.5);
	\draw [dashed] (11,1.5)--(12,1.5);

        \node(C)[fill=black,circle,inner sep=2pt]  at (-1,0) {};
       \node[fill=black,circle,inner sep=1.5pt]  at (0,-.5) {};
        \node[fill=black,circle,inner sep=1.5pt]  at (1,0) {};
        \node[fill=black,rectangle,inner sep=2pt]  at (2,-1.5) {};
        \node[fill=black,rectangle,inner sep=2pt]  at (2,-.5) {};
        \node[fill=black,rectangle,inner sep=2pt]  at (2,.5) {};
        \node[fill=black,rectangle,inner sep=2pt]  at (2,1.5) {};

	\draw (-1,0)--(2,1.5);
	\draw (-1,0)--(2,-1.5);
	\draw (0,-.5)--(1,0); %
	\draw (1,0)--(2,.5);
	\draw (1,0)--(2,-.5);

        \node[fill=black,rectangle,inner sep=2pt]  at (3,-1.5) {};
        \node[fill=black,rectangle,inner sep=2pt]  at (3,-.5) {};
        \node[fill=black,rectangle,inner sep=2pt]  at (3,.5) {};
        \node[fill=black,rectangle,inner sep=2pt]  at (3,1.5) {};
        \node[fill=black,circle,inner sep=1.5pt]  at (4,0) {};
        \node[fill=black,circle,inner sep=1.5pt]  at (3,-.5) {}; %
        \node[fill=black,circle,inner sep=1.5pt]  at (5,-.5) {};  
        \node(D)[fill=black,circle,inner sep=2pt]  at (6,0) {};

        \path[black,line,out=90,in=90] (C) edge (D);

	\draw (6,0)--(3,1.5);
	\draw (6,0)--(3,-1.5);
	\draw (5,-.5)--(3,.5);
	\draw (4,0)--(3,-.5);

	\draw [dashed] (2,-1.5)--(3,-1.5);
	\draw [dashed] (2,-.5)--(3,-.5);
	\draw [dashed] (2,.5)--(3,1.5);
	\draw [dashed] (2,1.5)--(3,.5);
        
        \node at (8,-.3) {$1$}; %
        \node at (10,.7) {$2'$};
        \node at (10,-1.3) {$3'$};
        \node at (13,.7) {$2$};
        \node at (13,-1.3) {$3$};
        \node at (15,-.3) {$1'$};

        \node at (6,-.3) {$1'$};
        \node at (5,-.8) {$3$};
        \node at  (4,-.3) {$2'$};
        \node at  (-1,-.3) {$1$};
        \node at (0,-.8) {$3'$};
        \node at (1,-.3) {$2$}; %

\end{tikzpicture}
\end{center}
\caption{Finding copies of $K_{3,3}$ in tanglegrams No.~6 and No.~13 after adding edges between the roots \cite{induci}.}\label{fig:K3,3}
\end{figure}

As it turns out, these two are the only non-planar tanglegrams of size $4$ (cf. Corollary~\ref{cor:t4remaining}). All others that have crossings in Figure~\ref{fig:t4}, can in fact be drawn without crossings. For example, Figure~\ref{fig:re-drawn} shows a crossing-free drawing of tanglegram No.~2.

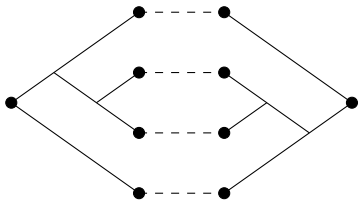
\begin{figure}[h!]
\begin{center}
\begin{tikzpicture}[scale = 0.8]

\coordinate (v0) at (0,0);
\coordinate (v1) at (0.7,0.5);
\coordinate (v2) at (1.4,0);
\coordinate (v3) at (2.1,1.5);
\coordinate (v4) at (2.1,0.5);
\coordinate (v5) at (2.1,-0.5);
\coordinate (v6) at (2.1,-1.5);

\coordinate (w0) at (5.6,0);
\coordinate (w1) at (4.9,-0.5);
\coordinate (w2) at (4.2,0);
\coordinate (w3) at (3.5,1.5);
\coordinate (w4) at (3.5,0.5);
\coordinate (w5) at (3.5,-0.5);
\coordinate (w6) at (3.5,-1.5);

\draw[fill] (v0) circle (.5ex);
\draw[fill] (v3) circle (.5ex);
\draw[fill] (v4) circle (.5ex);
\draw[fill] (v5) circle (.5ex);
\draw[fill] (v6) circle (.5ex);
\draw[fill] (w0) circle (.5ex);
\draw[fill] (w3) circle (.5ex);
\draw[fill] (w4) circle (.5ex);
\draw[fill] (w5) circle (.5ex);
\draw[fill] (w6) circle (.5ex);

\draw[dashed] (v3)--(w3);
\draw[dashed] (v4)--(w4);
\draw[dashed] (v5)--(w5);
\draw[dashed] (v6)--(w6);

\draw (v0)--(v3);
\draw (v0)--(v6);
\draw (v1)--(v5);
\draw (v2)--(v4);

\draw (w0)--(w3);
\draw (w0)--(w6);
\draw (w1)--(w4);
\draw (w2)--(w5);

\end{tikzpicture}
\end{center}
\caption{A drawing of tanglegram No.2 without crossings.}\label{fig:re-drawn}
\end{figure}

In the following, we will show that the two non-planar tanglegrams of size $4$ are in fact sufficient to characterize non-planarity of tanglegrams in the same way that $K_5$ and $K_{3,3}$ characterize non-planarity of graphs: every non-planar tanglegram has to contain at least one of the two.


\section{Induced subtanglegrams}
In a rooted plane binary tree $B$ with root $r$, a choice $\mathcal{L}$ of a set of leaves induces another rooted binary tree by taking the smallest subtree containing these leaves and 
designating as new root (which we will denote by $r_{\mathcal{L}}$) the vertex of the subtree closest to the old root, and
suppressing all vertices of degree $2$ other than the root, see Figure~\ref{fig:example}. The study of induced binary subtrees of (rooted or unrooted) \emph{leaf-labeled} binary trees is topical in the phylogenetic literature
\cite{SempleSteel}. It 
is immediate that for any $v\in\mathcal{L}$ the vertex $r_{\mathcal{L}}$ lies on the unique path between $r$ and $v$. For brevity,
we  use the notation $r_{xy}$ instead of $r_{\{x,y\}}$.

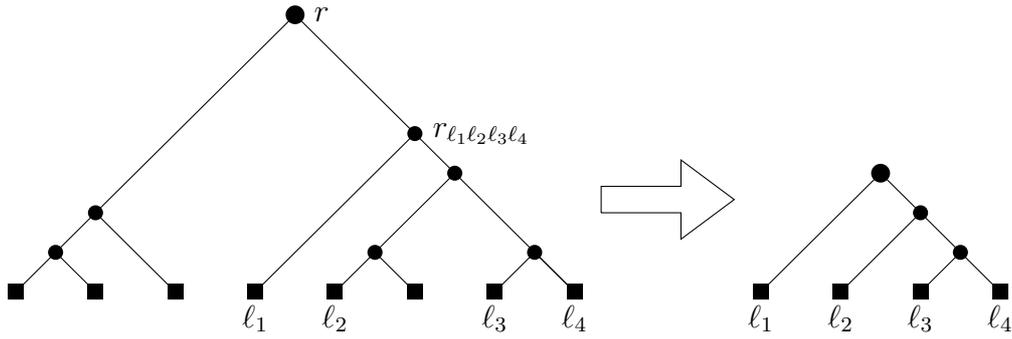
\begin{figure}[htbp]
\begin{center}
\begin{tikzpicture}[scale=0.7]
        \node[fill=black,rectangle,inner sep=3pt]  at (0,0) {};
        \node[fill=black,rectangle,inner sep=3pt]  at (1.5,0) {};
        \node[fill=black,rectangle,inner sep=3pt]  at (3,0) {};
        \node[fill=black,rectangle,inner sep=3pt]  at (4.5,0) {};
        \node[fill=black,rectangle,inner sep=3pt]  at (6,0) {};
        \node[fill=black,rectangle,inner sep=3pt]  at (7.5,0) {};
        \node[fill=black,rectangle,inner sep=3pt]  at (9,0) {};
        \node[fill=black,rectangle,inner sep=3pt]  at (10.5,0) {};

        \node[fill=black,circle,inner sep=2.5pt]  at (5.25,5.25) {};
        \node[fill=black,circle,inner sep=2pt]  at (1.5,1.5) {};
        \node[fill=black,circle,inner sep=2pt]  at (8.25,2.25) {}; 
        \node[fill=black,circle,inner sep=2pt]  at (0.75,0.75) {};
        \node[fill=black,circle,inner sep=2pt]  at (7.5,3) {};
        \node[fill=black,circle,inner sep=2pt]  at (6.75,0.75) {};
        \node[fill=black,circle,inner sep=2pt]  at (9.75,0.75) {};

	\draw (0,0)--(5.25,5.25)--(10.5,0);
	\draw (4.5,0)--(7.5,3);
	\draw (6,0)--(8.25,2.25);
	\draw (9,0)--(9.75,0.75);
	\draw (1.5,0)--(0.75,0.75);
	\draw (3,0)--(1.5,1.5);
	\draw (7.5,0)--(6.75,0.75);
	\draw (10.5,0)--(9.75,0.75);

	\node at (4.5,-0.5) {$\ell_1$};
	\node at (6,-0.5) {$\ell_2$};
	\node at (9,-0.5) {$\ell_3$};
	\node at (10.5,-0.5) {$\ell_4$};
	\node at (5.75,5.25) {$r$};
	\node at (8.75,3) {$r_{\ell_1\ell_2\ell_3\ell_4}$};

	\draw (11, 1.5)--(12.5,1.5)--(12.5,1)--(13.5,1.75)--(12.5,2.5)--(12.5,2)--(11,2)--(11,1.5);

        \node[fill=black,rectangle,inner sep=3pt]  at (14,0) {};
        \node[fill=black,rectangle,inner sep=3pt]  at (15.5,0) {};
        \node[fill=black,rectangle,inner sep=3pt]  at (17,0) {};
        \node[fill=black,rectangle,inner sep=3pt]  at (18.5,0) {};

        \node[fill=black,circle,inner sep=2pt]  at (17,1.5) {};
        \node[fill=black,circle,inner sep=2.5pt]  at (16.25,2.25) {};
        \node[fill=black,circle,inner sep=2pt]  at (17.75,0.75) {};

	\draw (14,0)--(16.25,2.25)--(18.5,0);
	\draw (17.,1.5)--(15.5,0);
	\draw (17.75,0.75)--(17,0);

	\node at (14,-0.5) {$\ell_1$};
	\node at (15.5,-0.5) {$\ell_2$};
	\node at (17,-0.5) {$\ell_3$};
	\node at (18.5,-0.5) {$\ell_4$};

\end{tikzpicture}
\end{center}
\caption{A rooted binary tree with root $r$, four leaves $\ell_1,\ell_2,\ell_3,\ell_4$ selected, the vertex $r_{\ell_1\ell_2\ell_3\ell_4}$ and the tree induced by the selected leaves.}\label{fig:example}
\end{figure}

Given a  layout of the tanglegram $T$ with left binary tree $L$ with root $r$ and right binary tree $R$ with root $\rho$ and a set $E$ of matching edges
between the leaf sets of the left and right binary plane trees, $E$ identifies a subset of leaves on both sides. These leaf sets induce respectively
a left and right induced binary plane tree, which define a  layout of a tanglegram  $T'$ when we put back the edges of $E$ between the corresponding leaves.
We say that $E$ induces this   {\em  sublayout}  of the original layout of tanglegram $T$, and we call $T'$ the {\em subtanglegram of  tanglegram $T$} induced by the matching edge set $E$. As the sublayout and switch operators commute, this definition does not depend 
on the particular layout of $T$, it just depends on the tanglegram. We will use $r_E$ and $\rho_E$ for the vertices in $T$ corresponding to the roots of the left and right subtrees of this induced subtanglegram. There is a natural partial order by inclusion on the set of induced subtanglegrams of a given tanglegram.

\begin{figure}[htbp]
\begin{center}
\begin{tikzpicture}[scale=.7]
        \node[fill=black,circle,inner sep=2.5pt]  at (0,0) {};
        \node[fill=black,circle,inner sep=2pt]  at (2,-1) {};
        \node[fill=black,circle,inner sep=2pt]  at (3,1.5) {};
        \node[fill=black,circle,inner sep=2pt]  at (3,-1.5) {};
        \node[fill=black,rectangle,inner sep=3pt]  at (4,2) {};
        \node[fill=black,rectangle,inner sep=3pt]  at (4,1) {};
        \node[fill=black,rectangle,inner sep=3pt]  at (4,0) {};
        \node[fill=black,rectangle,inner sep=3pt]  at (4,-1) {};
        \node[fill=black,rectangle,inner sep=3pt]  at (4,-2) {};
        \draw (4,-2)--(0,0)--(4,2);
        \draw (4,1)--(3,1.5);
        \draw (4,0)--(2,-1);
        \draw (4,-1)--(3,-1.5);
        \draw[dashed] (4,2)--(6,2);
        \draw[dashed] (4,1)--(6,1);
        \draw[dashed] (4,0)--(6,0);
        \draw[dashed] (4,-1)--(6,-1);
        \draw[dashed] (4,-2)--(6,-2);
       \node[fill=black,rectangle,inner sep=3pt]  at (6,2) {};
        \node[fill=black,rectangle,inner sep=3pt]  at (6,1) {};
        \node[fill=black,rectangle,inner sep=3pt]  at (6,0) {};
        \node[fill=black,rectangle,inner sep=3pt]  at (6,-1) {};
        \node[fill=black,rectangle,inner sep=3pt]  at (6,-2) {};
        \node[fill=black,circle,inner sep=2pt]  at (7,1.5) {};
        \node[fill=black,circle,inner sep=2pt]  at (7,-1.5) {};
        \node[fill=black,circle,inner sep=2pt]  at (8,1) {};
        \node[fill=black,circle,inner sep=2.5pt]  at (10,0) {};
        \draw (6,-2)--(10,0)--(6,2);
       \draw (6,1)--(7,1.5);
        \draw (6,0)--(8,1);
        \draw (6,-1)--(7,-1.5);
 	\node at (5,2.2) {$e_1$};
    	\node at (5,1.2) {$e_2$};    
  	\node at (5,.2) {$e_3$};
 	\node at (8.4,1.3) {$\rho_{e_1e_2e_3}$};	
 	\node at (-0.2,0.5) {$r_{e_1e_2e_3}=r$};
 	\node at (10,.5) {$\rho$};	
 	\draw (10.5, -0.2)--(11,-0.2)--(11,-0.5)--(11.5,0)--(11,.5)--(11,.2)--(10.5,.2)--(10.5,-.2);

        \node[fill=black,circle,inner sep=2.5pt]  at (12,0) {};
        \node[fill=black,circle,inner sep=2pt]  at (13,.5) {};
       \node[fill=black,rectangle,inner sep=3pt]  at (14,1) {};
       \node[fill=black,rectangle,inner sep=3pt]  at (14,0) {};
       \node[fill=black,rectangle,inner sep=3pt]  at (14,-1) {};
       \draw (14,1)--(12,0)--(14,-1);
       \draw (14,0)--(13,.5);
        \draw[dashed] (14,1)--(16,1);
        \draw[dashed] (14,0)--(16,0);
        \draw[dashed] (14,-1)--(16,-1);
        \draw (16,1)--(18,0)--(16,-1);
        \draw (16,0)--(17,.5);
        \node[fill=black,circle,inner sep=2.5pt]  at (18,0) {};
        \node[fill=black,circle,inner sep=2pt]  at (17,.5) {};	
        \node[fill=black,rectangle,inner sep=3pt]  at (16,1) {};
       \node[fill=black,rectangle,inner sep=3pt]  at (16,0) {};
       \node[fill=black,rectangle,inner sep=3pt]  at (16,-1) {};
  	\node at (15,1.2) {$e_1$};
    	\node at (15,.2) {$e_2$};    
  	\node at (15,.-.8) {$e_3$};
        
\end{tikzpicture}
\end{center}
\caption{A tanglegram $T$ with matching edges $e_1,e_2,e_3$ selected, the vertices $r_{e_1e_2e_3}$ and $\rho_{e_1,e_2,e_3}$, and the subtanglegram induced by the selected edges.}\label{fig:example2}
\end{figure}
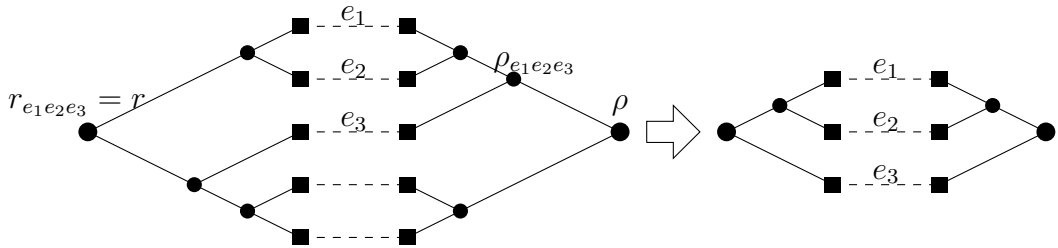

Sometimes we put {\em scars} on the edges of induced subtanglegrams to remember where the eliminated matching edges were connected 
to the surviving part. Let $e\in \sigma\setminus E$ be a matching edge in a layout of the tanglegram $T$. When we consider $L_E$ and $R_E$, 
the smallest subtrees of
$L$ and $R$ that contain the leaf set corresponding to $E$, the unique path connecting $e$ to $r$ in $L$ either
enters $L_E$ at a vertex of degree 2 or does not enter $L_E$ at all, and similarly, the unique path connecting $e$ to $\rho$ in $R$ either enters $R_E$ at a vertex of degree 2 or does not enter $R_E$ at all. We refer to these degree $2$ vertices (when they exist) as the {\em hosts} of $e$ in $L_E$ and $R_E$. A single vertex can  host  several other matching edges not in $E$. Hosts in $L_E$ (respectively in $R_E$) are in a natural partial order by separation from $r$ (respectively $\rho$). 

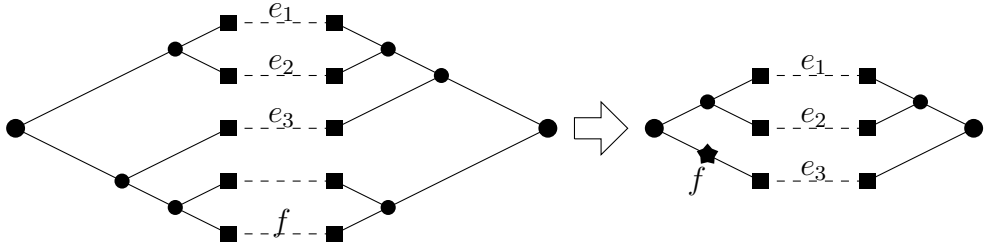
\begin{figure}[htbp]
\begin{center}
\begin{tikzpicture}[scale=.7]
        \node[fill=black,circle,inner sep=2.5pt]  at (0,0) {};
        \node[fill=black,circle,inner sep=2pt]  at (2,-1) {};
        \node[fill=black,circle,inner sep=2pt]  at (3,1.5) {};
        \node[fill=black,circle,inner sep=2pt]  at (3,-1.5) {};
        \node[fill=black,rectangle,inner sep=3pt]  at (4,2) {};
        \node[fill=black,rectangle,inner sep=3pt]  at (4,1) {};
        \node[fill=black,rectangle,inner sep=3pt]  at (4,0) {};
        \node[fill=black,rectangle,inner sep=3pt]  at (4,-1) {};
        \node[fill=black,rectangle,inner sep=3pt]  at (4,-2) {};
        \draw (4,-2)--(0,0)--(4,2);
        \draw (4,1)--(3,1.5);
        \draw (4,0)--(2,-1);
        \draw (4,-1)--(3,-1.5);
        \draw[dashed] (4,2)--(6,2);
        \draw[dashed] (4,1)--(6,1);
        \draw[dashed] (4,0)--(6,0);
        \draw[dashed] (4,-1)--(6,-1);
        \draw[dashed] (4,-2)--(6,-2);
       \node[fill=black,rectangle,inner sep=3pt]  at (6,2) {};
        \node[fill=black,rectangle,inner sep=3pt]  at (6,1) {};
        \node[fill=black,rectangle,inner sep=3pt]  at (6,0) {};
        \node[fill=black,rectangle,inner sep=3pt]  at (6,-1) {};
        \node[fill=black,rectangle,inner sep=3pt]  at (6,-2) {};
        \node[fill=black,circle,inner sep=2pt]  at (7,1.5) {};
        \node[fill=black,circle,inner sep=2pt]  at (7,-1.5) {};
        \node[fill=black,circle,inner sep=2pt]  at (8,1) {};
        \node[fill=black,circle,inner sep=2.5pt]  at (10,0) {};
        \draw (6,-2)--(10,0)--(6,2);
       \draw (6,1)--(7,1.5);
        \draw (6,0)--(8,1);
        \draw (6,-1)--(7,-1.5);
 	\node at (5,2.2) {$e_1$};
    	\node at (5,1.2) {$e_2$};    
  	\node at (5,.2) {$e_3$};
  	\node at (5,-1.8) {$f$};
 	\draw (10.5, -0.2)--(11,-0.2)--(11,-0.5)--(11.5,0)--(11,.5)--(11,.2)--(10.5,.2)--(10.5,-.2);

        \node[fill=black,circle,inner sep=2.5pt]  at (12,0) {};
        \node[fill=black,circle,inner sep=2pt]  at (13,.5) {};
       \node[fill=black,rectangle,inner sep=3pt]  at (14,1) {};
       \node[fill=black,rectangle,inner sep=3pt]  at (14,0) {};
       \node[fill=black,rectangle,inner sep=3pt]  at (14,-1) {};
       \draw (14,1)--(12,0)--(14,-1);
       \draw (14,0)--(13,.5);
        \draw[dashed] (14,1)--(16,1);
        \draw[dashed] (14,0)--(16,0);
        \draw[dashed] (14,-1)--(16,-1);
        \draw (16,1)--(18,0)--(16,-1);
        \draw (16,0)--(17,.5);
        \node[fill=black,circle,inner sep=2.5pt]  at (18,0) {};
        \node[fill=black,circle,inner sep=2pt]  at (17,.5) {};	
        \node[fill=black,rectangle,inner sep=3pt]  at (16,1) {};
       \node[fill=black,rectangle,inner sep=3pt]  at (16,0) {};
       \node[fill=black,rectangle,inner sep=3pt]  at (16,-1) {};
  	\node at (15,1.2) {$e_1$};
    	\node at (15,.2) {$e_2$};    
  	\node at (15,.-.8) {$e_3$};
       \node[fill=black,star,star points=5,inner sep=2pt]  at (13,-.5) {};
  	\node at (12.8,-1) {$f$};
        
\end{tikzpicture}
\end{center}
\caption{The scar of edge $f$ in the example of Figure~\ref{fig:example2}. Notice that the right tree does not have a scar for $f$.}\label{fig:scar}
\end{figure}

{\em Scars} are markings on the edges of the induced subtanglegram, corresponding to the host vertices and following the natural partial order above, such that every scar marks the names of all edges hosted. Note that the partial order of the scars and the corresponding marks do not depend on the layout, they only depend on the tanglegram. Figure~\ref{fig:scar} illustrates a scar.

The following lemmata will be used to prove our main result:
\begin{lemma} \label{lem:kukac}
If $F$ is a planar 
tanglegram with a distinguished marked non-matching edge, such that in every planar layout of
$F$, the marked edge does not lie on the boundary  of the infinite face, then $F$ has a set of three edges $E$ that induces
the following subtanglegram $S$, where the marked edge lies on one of the paths of $F$ corresponding to the bold edges.
\begin{figure}[h]
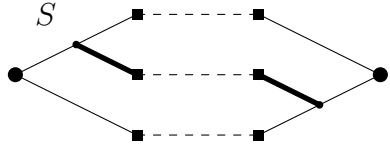

\begin{center}
\subtangpicture[scale=.8]
\end{center}
\caption{The subtanglegram $S$.}
\end{figure}
\end{lemma}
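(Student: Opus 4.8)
The plan is to pass from planar layouts to linear orders of the leaf set, to read off combinatorially when the marked edge lies on the infinite face, and then to extract $S$ from an extremal planar layout.

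\emph{Reformulation.} A planar layout of a tanglegram is the same datum as a linear order $\lambda$ of the leaf set (the two leaf sets identified through $\sigma$) in which the leaf set of every subtree of $L$ and of every subtree of $R$ is an interval of $\lambda$; the boundary of the infinite face of such a layout consists of the two outermost root-to-leaf paths of $L$, those of $R$, and the top and bottom matching edges. Write the marked edge as $e=\{u,v\}$ with $u$ the endpoint nearer the root of $L$ (the case in which $e$ is an edge of $R$ is symmetric and produces the right-hand bold path of $S$), let $L_v$ be the subtree of $L$ rooted at $v$ and $A=L(L_v)$. Then $e$ lies on the infinite face of the layout $\lambda$ exactly when $A$ is a prefix or a suffix of $\lambda$. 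So the hypothesis reads: $F$ is planar, but $A$ is neither a prefix nor a suffix of any valid $\lambda$.

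\emph{Decomposition and target.} Let $r=p_0,p_1,\dots,p_m=u,p_{m+1}=v$ be the path from the root of $L$ to $v$, let $L^{(k)}$ be the subtree hanging off $p_k$ away from $v$, and put $G_k=L(L^{(k)})$, so $L(L)=A\sqcup G_m\sqcup\dots\sqcup G_0$. In every valid $\lambda$ the blocks $A,G_m,G_{m-1},\dots,G_0$ occur in a caterpillar pattern: taking $k=m,m-1,\dots,0$ in turn, $G_k$ is appended to one of the two ends of the interval then occupied by $A\cup G_m\cup\dots\cup G_{k+1}$. Hence $A$ is a prefix when every $G_k$ goes on the right and a suffix when every $G_k$ goes on the left, so $A$ can always be made a prefix (and a suffix) using the $L$-constraints alone, and the obstruction in the hypothesis must be forced by $R$. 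Unwinding the definition of induced subtanglegram, a set of three matching edges inducing $S$ with $e$ on the left bold path is precisely a triple of leaves $y\in A$, $x\in G_k$, $z\in G_l$ with $l<k\le m$ such that the three-leaf tree of $R$ on $\{\sigma(x),\sigma(y),\sigma(z)\}$ is the cherry $\{\sigma(y),\sigma(z)\}$ with $\sigma(x)$ outside it; equivalently, the smallest subtree of $R$ containing $\sigma(y)$ and $\sigma(z)$ omits $\sigma(x)$. (Note $m\ge1$: if $m=0$ then $e$ is incident to the root of $L$, hence on the infinite face of every layout, contrary to the hypothesis.)

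\emph{Extraction.} Among all valid $\lambda$ choose one, $\lambda^*$, minimising the number of leaves occurring before the block $A$; this number is positive by hypothesis. The leaf $z$ immediately before $A$ lies in some $G_j$, and by the caterpillar description the whole block $G_j$ sits just before $A$ while $G_m,\dots,G_{j+1}$ went on the right, so in $\lambda^*$ the block $A$ is directly followed by $G_m,G_{m-1},\dots,G_{j+1}$ in that order; let $y$ be the first leaf of $A$. The switch at $p_j$ would move $G_j$ to the far side of $L(L_{p_{j+1}})$, strictly lowering the number of leaves before $A$; by minimality of $\lambda^*$ this switch destroys planarity, so some subtree $R'$ of $R$ has $L(R')$ straddling the seam between the $G_j$-block and the $L(L_{p_{j+1}})$-block of $\lambda^*$. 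Thus $L(R')$ is a nonempty suffix of the $G_j$-block together with a nonempty prefix of the $L(L_{p_{j+1}})$-block, is different from $G_j\cup L(L_{p_{j+1}})$, and contains $\sigma(z)$ and $\sigma(y)$, so the smallest subtree of $R$ containing $\sigma(y),\sigma(z)$ sits inside $R'$. A short case analysis of how $L(R')$ can meet the blocks $A,G_m,\dots,G_{j+1}$ then produces a leaf $x\in G_k$ with $j<k\le m$ and $\sigma(x)\notin L(R')$, and $\{x,y,z\}$ gives the desired copy of $S$ with $e$ on its bold path.

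\emph{The main obstacle.} The delicate point is locating $x$ in the degenerate situations: $j=m$ (no off-subtree lies deeper than $G_j$), and, more generally, $L(R')\supseteq L(L_{p_{j+1}})$ (every $\sigma(x)$ with $x$ in a deeper $G_k$ already lies inside $R'$). The plan for these is to descend to the strictly smaller subtanglegram induced by the matching edges into $L(R')$, which still contains (the path of) $e$, and to check — using the minimality of $\lambda^*$ — that $e$ remains a non-matching edge forced off the infinite face there, so that induction on the number of leaves applies; the base case is $|F|\le3$, where the only tanglegram admitting a buried non-matching edge is $S$ itself with $e$ on a bold path. Keeping the hypothesis alive under this reduction — in particular preventing $e$ from becoming incident to a root of the smaller tanglegram, which would make its conclusion vacuous — is the crux, and may require iterating the extremal choice (or choosing $R'$ minimal) rather than a single descent.
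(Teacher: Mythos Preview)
Your extremal-layout strategy is reasonable but leaves a genuine gap and diverges from the paper's argument in a way that matters.  Two concrete issues.  First, after the switch at $p_j$ destroys $R$-validity you assert that the broken subtree $R'$ has $L(R')$ straddling the $G_j\mid B$ seam; this is not justified.  Swapping the adjacent blocks $G_j$ and $B$ also breaks any $R$-interval of the form (suffix of $B$)${}+{}$(prefix of $Q$) or (suffix of $P$)${}+{}$(proper prefix of $G_j$), neither of which contains both $y$ and $z$.  You can repair this by passing instead to the minimal $R$-subtree $R^*$ with $\sigma(y),\sigma(z)\in L(R^*)$ and checking directly that $L(R^*)$ fails to be an interval after the swap \emph{unless} $L(R^*)\supseteq G_j\cup B$; but that exception is exactly your degenerate case.

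Second, and this is the real obstacle, your induction plan for the degenerate cases (``descend to the subtanglegram on $L(R')$'') is not shown to preserve the hypothesis.  Restricting to a subtanglegram creates \emph{new} planar layouts that were unavailable in $F$, and nothing you have established rules out one of these putting the image of $e$ on the outer face; the minimality of $\lambda^*$ is a statement about layouts of $F$, not of $F|_{L(R')}$, so it does not transfer.  The paper avoids both problems with a different extremal choice: rather than minimising over layouts, it walks along the root-to-$m$ path and lets $m^*$ be the \emph{first} edge on that path that is buried in every planar layout.  This guarantees a planar layout with the far endpoint $\rho^*$ of $m^*$ on the boundary, after which a short direct case analysis (on which of $r_{e_1e_2}$, $r_{e_2e_3}$ equals $r$, and then on whether $m$ lies on the $\rho^*$--$e_2$ path or on a $\rho^*$--$f$ path for some other $f$) produces the triple with no induction needed.
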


\begin{proof} Let the left and right tree of $F$ be $L$ and $R$ with roots $r$ and $\rho$ respectively, and $\sigma$ denote the set of matching edges.
We denote the marked edge by $m$ and assume without loss of generality that $m$ is an edge of $R$ (the argument is the same otherwise with the roles of $L$ and $R$ exchanged). Consider the unique path $P$ in $R$ that starts from $\rho$ and whose last edge is $m$. Let $m^*$ be the edge
of $P$ closest to $\rho$ that does not lie on the boundary of any planar layout of $F$ (potentially $m^*=m$). 
   
 \begin{figure}[htbp]
\begin{center}
\begin{tikzpicture}

	\node at (6.5,-.25) {$r$};
	\node at (7,-.6) {$r_{e_1e_2}$};
	\node at (9.7,-.65) {$\rho^*$};
	\node at (10.5,.3) {$\rho$};
        \node(A2)[fill=black,circle,inner sep=2.5pt]  at (6.5,0) {};
        \node(B2)[fill=black,circle,inner sep=2.5pt]  at (10.25,0) {};        
        \node(C2)[fill=black,circle,inner sep=2pt]  at (7.25,-.375) {};
        \node(D2)[fill=black,circle,inner sep=2pt]  at (9.5,-.375) {};        
        \node(E2)[fill=black,rectangle,inner sep=2.5pt] at (8,.75) {};
        \node(F2)[fill=black,rectangle,inner sep=2.5pt] at (8,0) {};
        \node(G2)[fill=black,rectangle,inner sep=2.5pt] at (8,-.75) {};       
        \node(H2)[fill=black,rectangle,inner sep=2.5pt] at (8.75,.75) {};
        \node(J2)[fill=black,rectangle,inner sep=2.5pt] at (8.75,0) {};
        \node(K2)[fill=black,rectangle,inner sep=2.5pt] at (8.75,-.75) {};                 
      
        \draw (E2) -- (A2);
        \draw (A2) -- (C2);
         \draw (C2) -- (G2);
         \draw (C2) -- (F2);       
        \draw (B2)--(D2); 
        \draw[line width =2] (D2) --(J2);
        \draw (H2) -- (B2);        
       \draw (D2) --(K2);
        \draw[dashed] (E2)--(H2);
        \draw[dashed] (F2)--(J2);
        \draw[dashed] (G2)--(K2);
	\node at (8.375,.95) {$e_3$};
	\node at (8.375,.25) {$e_2$};
	\node at  (8.375,-.55) {$e_1$};

	
	\node at (0,-.25) {$r$};
	\node at (.4,.7) {$r_{e_2e_3}$};
	\node at (3.2,-.65) {$\rho^*$};
	\node at (3.9,.3) {$\rho$};
        \node(A3)[fill=black,circle,inner sep=2.5pt]  at (0,0) {};
        \node(B3)[fill=black,circle,inner sep=2.5pt]  at (3.75,0) {};        
        \node(C3)[fill=black,circle,inner sep=2pt]  at (.75,.375) {};
        \node(D3)[fill=black,circle,inner sep=2pt]  at (3,-.375) {};        
        \node(E3)[fill=black,rectangle,inner sep=2.5pt] at (1.5,.75) {};
        \node(F3)[fill=black,rectangle,inner sep=2.5pt] at (1.5,0) {};
        \node(G3)[fill=black,rectangle,inner sep=2.5pt] at (1.5,-.75) {};       
        \node(H3)[fill=black,rectangle,inner sep=2.5pt] at (2.25,.75) {};
        \node(J3)[fill=black,rectangle,inner sep=2.5pt] at (2.25,0) {};
        \node(K3)[fill=black,rectangle,inner sep=2.5pt] at (2.25,-.75) {};                 
      
        \draw (E3) --(C3);
        \draw (A3) --(G3);
         \draw (A3) -- (C3);
         \draw (C3) -- (F3);       
        \draw (B3)--(D3); 
        \draw[line width=2pt] (D3)-- (J3);
        \draw (H3) -- (B3);        
       \draw (D3) -- (K3);
        \draw[dashed] (E3)--(H3);
        \draw[dashed] (F3)--(J3);
        \draw[dashed] (G3)--(K3);
	\node at (1.875,.95) {$e_3$};
	\node at (1.875,0.25) {$e_2$};
	\node at  (1.875,-0.55) {$e_1$};

\end{tikzpicture}
\end{center}
\caption{The possible subtanglegrams of $F$ induced by $e_1,e_2,e_3$. The edge containing $m^*$ is bold.}\label{fig:szembe}
\end{figure}

Consider a planar layout of $F$ where one endpoint of $m^*$ (which we will denote by $\rho^*$) lies on the boundary of the infinite face; by the definition of $m^*$ such a layout exists. Without loss of generality it is the lower of the two $r$-$\rho$ paths on the boundary. Let $E^*$ be the set of matching edges on the leaves of the subtree of $R$ rooted at $\rho^*$. By our assumptions $\rho\ne\rho^*$, $|E^*|\ge 2$, $E^*\ne \sigma$, and all edges of $E^*$ lie below all edges of $M\setminus E^*$
in the layout.
Let $e_1\in E^*$ and $e_3\in \sigma\setminus E^*$ be the matching edges of $F$ that are on the boundary of the infinite face of the layout  and let $e_2\in E^*$ be the edge that lies above all other edges of $E^*$ in the layout; so we have
 $\rho=\rho_{e_1e_3}$, $r=r_{e_1e_3}$ and $\rho^*=\rho_{e_1e_2}$ (See  Figure~\ref{fig:szembe}). We must have
 $r\in\{r_{e_1e_2},r_{e_2e_3}\}$. If $r=r_{e_2e_3}$, then $r_{e_1e_2}$ lies on the unique $r$-$e_1$ path in $L$, and performing a mirror
 operation on $r_{e_1e_2}$ and $\rho^*$ results in a planar layout of $F$ where $m^*$ lies on the boundary of the infinite face, which is a contradiction. Therefore we must have $r=r_{e_1e_2}$, and $r_{e_2e_3}$ lies on the unique $r$-$e_3$ path in $L$.

 \begin{figure}[htbp]
\begin{center}
\begin{tikzpicture}
	\node at (0,-.25) {$r$};
	\node at (3.4,-.7) {$\rho^*$};
	\node at (3.9,.3) {$\rho$};
        \node(A3)[fill=black,circle,inner sep=2.5pt]  at (0,0) {};
        \node(B3)[fill=black,circle,inner sep=2.5pt]  at (3.75,0) {};        
        \node(C3)[fill=black,circle,inner sep=2pt]  at (.75,.75) {}; 
        \node(D3)[fill=black,circle,inner sep=2pt]  at (3.25,-.45) {};         
        \node(E3)[fill=black,rectangle,inner sep=2.5pt] at (1.5,1.125) {};
        \node(F3)[fill=black,rectangle,inner sep=2.5pt] at (1.5,.375) {};
        \node(G3)[fill=black,rectangle,inner sep=2.5pt] at (1.5,-1.15) {};       
        \node(H3)[fill=black,rectangle,inner sep=2.5pt] at (2.25,1.125) {};
        \node(J3)[fill=black,rectangle,inner sep=2.5pt] at (2.25,.375) {};
        \node(K3)[fill=black,rectangle,inner sep=2.5pt] at (2.25,-1.125) {};                  
        \node(L3)[fill=black,rectangle,inner sep=2.5pt] at (1.5,-.375) {};       
        \node(M3)[fill=black,rectangle,inner sep=2.5pt] at (2.25,-.375) {};       
        \node(N3)[fill=black,circle,inner sep=2pt] at (2.75,0) {};       
        \node(O3)[fill=black,circle,inner sep=2pt] at (.375,.375) {};               
        \draw (E3) --(C3);
        \draw (A3) --(G3);
         \draw (A3) -- (O3)--(C3);
         \draw (C3) -- (F3);       
        \draw (B3)--(D3); 
        \draw (D3)--(N3)-- (J3);
        \draw (H3) -- (B3);        
       \draw (D3) -- (K3);
       \draw (O3)--(L3);              
       \draw[line width=2] (N3)--(M3);
        \draw[dashed] (E3)--(H3);
        \draw[dashed] (F3)--(J3);
        \draw[dashed] (G3)--(K3);
         \draw[dashed] (L3)--(M3);
	\node at (1.875,-.2) {$f$};        
	\node at (1.875,1.3) {$e_3$};
	\node at (1.875,0.55) {$e_2$};
	\node at  (1.875,-1) {$e_1$};
	\node at (2.9,.3) {$\rho_{fe_2}$};

	\node at (4.5,-.25) {$r$};
	\node at (7.9,-.7) {$\rho^*$};
	\node at (8.4,.3) {$\rho$};
        \node(A2)[fill=black,circle,inner sep=2.5pt]  at (4.5,0) {};
        \node(B2)[fill=black,circle,inner sep=2.5pt]  at (8.25,0) {};        
        \node(C2)[fill=black,circle,inner sep=2pt]  at (5.25,.75) {}; 
        \node(D2)[fill=black,circle,inner sep=2pt]  at (7.75,-.45) {};         
        \node(E2)[fill=black,rectangle,inner sep=2.5pt] at (6,1.125) {};
        \node(F2)[fill=black,rectangle,inner sep=2.5pt] at (6,.375) {};
        \node(G2)[fill=black,rectangle,inner sep=2.5pt] at (6,-1.15) {};       
        \node(H2)[fill=black,rectangle,inner sep=2.5pt] at (6.75,1.125) {};
        \node(J2)[fill=black,rectangle,inner sep=2.5pt] at (6.75,.375) {};
        \node(K2)[fill=black,rectangle,inner sep=2.5pt] at (6.75,-1.125) {};                  
        \node(L2)[fill=black,rectangle,inner sep=2.5pt] at (6,-.375) {};       
        \node(M2)[fill=black,rectangle,inner sep=2.5pt] at (6.75,-.375) {};       
        \node(N2)[fill=black,circle,inner sep=2pt] at (7.25,0) {};       
        \node(O2)[fill=black,circle,inner sep=2pt] at (5.625,.5615) {};       
        \draw (E2) --(C2);
        \draw (A2) --(G2);
         \draw (A2) -- (C2);
         \draw (C2) -- (O2)--(F2);       
        \draw (B2)--(D2); 
        \draw (D2)--(N2)-- (J2);
        \draw (H2) -- (B2);        
       \draw (D2) -- (K2);
       \draw (O2)--(L2);       
       \draw[line width=2] (N2)--(M2);
        \draw[dashed] (E2)--(H2);
        \draw[dashed] (F2)--(J2);
        \draw[dashed] (G2)--(K2);
         \draw[dashed] (L2)--(M2);
	\node at (6.375,-.2) {$f$};        
	\node at (6.375,1.3) {$e_3$};
	\node at (6.375,0.55) {$e_2$};
	\node at  (6.375,-1) {$e_1$};
	\node at (7.4,.3) {$\rho_{fe_2}$};

	\node at (4,-1.7) {when $r=r_{fe_1}$};

	\draw (8.65,-2)--(8.65,1.75);

	\node at (9,-.25) {$r$};
	\node at (12.4,-.7) {$\rho^*$};
	\node at (12.9,.3) {$\rho$};
        \node(A1)[fill=black,circle,inner sep=2.5pt]  at (9,0) {};
        \node(B1)[fill=black,circle,inner sep=2.5pt]  at (12.75,0) {};        
        \node(C1)[fill=black,circle,inner sep=2pt]  at (9.75,.75) {}; 
        \node(D1)[fill=black,circle,inner sep=2pt]  at (12.25,-.45) {};         
        \node(E1)[fill=black,rectangle,inner sep=2.5pt] at (10.5,1.125) {};
        \node(F1)[fill=black,rectangle,inner sep=2.5pt] at (10.5,.375) {};
        \node(G1)[fill=black,rectangle,inner sep=2.5pt] at (10.5,-1.15) {};       
        \node(H1)[fill=black,rectangle,inner sep=2.5pt] at (11.25,1.125) {};
        \node(J1)[fill=black,rectangle,inner sep=2.5pt] at (11.25,.375) {};
        \node(K1)[fill=black,rectangle,inner sep=2.5pt] at (11.25,-1.125) {};                  
        \node(L1)[fill=black,rectangle,inner sep=2.5pt] at (10.5,-.375) {};       
        \node(M1)[fill=black,rectangle,inner sep=2.5pt] at (11.25,-.375) {};       
        \node(N1)[fill=black,circle,inner sep=2pt] at (11.75,0) {};   
        \node(O1)[fill=black,circle,inner sep=2pt] at (10,-.75) {};   
        \draw (E1) --(C1);
        \draw (A1) --(O1)--(G1);
         \draw (A1) -- (C1);
         \draw (C1) -- (F1);       
        \draw (B1)--(D1); 
        \draw (D1)--(N1)-- (J1);
        \draw (H1) -- (B1);        
       \draw (D1) -- (K1);
       \draw (O1)--(L1);
       \draw[line width=2] (N1)--(M1);
        \draw[dashed] (E1)--(H1);
        \draw[dashed] (F1)--(J1);
        \draw[dashed] (G1)--(K1);
         \draw[dashed] (L1)--(M1);
	\node at (10.875,-.2) {$f$};        
	\node at (10.875,1.3) {$e_3$};
	\node at (10.875,0.55) {$e_2$};
	\node at  (10.875,-1) {$e_1$};
	\node at (11.9,.3) {$\rho_{fe_2}$};
	\node at (10.85, -1.7) {when $r=r_{fe_2}$};
\end{tikzpicture}
\end{center}
\caption{The possible subtanglegrams of $F$ induced by $e_1,e_2,e_3,f$. The edge containing $m$ is bold.}\label{fig:mcases}
\end{figure}
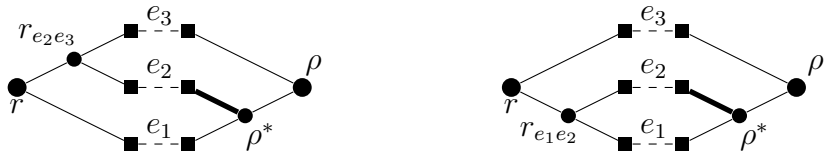

If $m$ lies on the unique $\rho^*$-$e_2$ path in $R$ (including the case that $m=m^*$), then the subtanglegram induced by $e_1,e_2,e_3$ satisfies the conclusion of our lemma and we are done.
Otherwise let $f$ be any matching edge such that $m$ lies on the unique  $\rho^*$-$f$ path in $R$. By our assumptions,
$f\notin\{e_1,e_2\}$, $f$ lies between $e_1$ and $e_2$ in our planar layout and $\rho_{fe_2}$ lies on the unique $\rho^*$-$e_2$ path in
$R$ (Figure~\ref{fig:mcases}). We have
$r\in\{r_{e_1f},r_{e_2f}\}$. If $r=r_{e_1f}$, then the subtanglegram of $F$ induced by $e_1,e_3,f$ satisfies the conclusion of our lemma.
If $r=r_{e_2f}$, then the subtanglegram induced by $e_1,e_2,f$ satisfies the conclusion. Either way, we are done.
\end{proof}

\begin{lemma}\label{lem:zip} Let $F$ be a tanglegram with two sets of matching edges,  $E_1,E_2$, such that 
$E_1\cap E_2=\{f\}$ and $E_1\cup E_2$ contains all matching edges of $F$. For $i\in\{1,2\}$, let 
$F_i$  be the subtanglegram induced by $E_i$, and assume
that the scars of the edges of $E_1$ in $F_2$ as well as the scars of the edges of $E_2$ in $F_1$ are on a unique root-to-root path containing $f$ but no other matching edge.
If $F_1$ and $F_2$ each have planar layouts in which the two matching edges on the boundary  of the infinite face are $f$ and  $e_1$ and correspondingly $f$ and $e_2$, then $F$  has a planar layout in which the matching edges on the boundary of the infinite face are $e_1$ and $e_2$.
\end{lemma}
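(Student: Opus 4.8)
The plan is to recast planarity of a tanglegram as the existence of a suitable linear order of its matching edges, and then to obtain the required order for $F$ by concatenating, along the common edge $f$, the orders that witness planarity of $F_1$ and $F_2$. The first ingredient is the order formulation: since a crossing-free layout forces the matching to be order-preserving between the two linearly ordered leaf sets, a tanglegram $(L,R,\sigma)$ has a planar layout if and only if there is a linear order $\pi$ of its matching edges such that, for every internal vertex $v$ of $L$, the edges whose left leaf lies in the subtree at $v$ form an interval of $\pi$, and likewise for every internal vertex of $R$; moreover the matching edges on the boundary of the infinite face of such a layout are exactly the first and last elements of $\pi$. I would record this equivalence first (it is essentially the already-quoted fact that a layout depends only on the rankings of the leaves).

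From the hypotheses I then obtain a linear order $\pi_1$ of $E_1$ compatible with $F_1$ whose first and last elements are $f$ and $e_1$ in some order, and similarly an order $\pi_2$ of $E_2$ for $F_2$ with extremes $f$ and $e_2$. Reversing an order if necessary (which only mirrors both of its trees and changes nothing) I may assume $\pi_1$ runs from $e_1$ to $f$ and $\pi_2$ runs from $f$ to $e_2$. Define $\pi$ to be $\pi_1$ followed by $\pi_2$ with the repeated copy of $f$ deleted: this is a linear order of the matching edges of $F$, running from $e_1$ to $e_2$. Thus it suffices to prove that $\pi$ is compatible with both $L$ and $R$, after which the order formulation delivers a planar layout of $F$ whose infinite-face matching edges are $e_1$ and $e_2$.

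The key step is to read the shape of $L$ (and symmetrically $R$) off the scar hypothesis. Let $P_L$ be the path in $L$ from the root to the left leaf of $f$. I claim that every subtree hanging off an internal vertex of $P_L$ is monochromatic, i.e.\ either all of its leaves are matched by $E_1$ or all by $E_2$: a hanger with leaves of both kinds would survive in $L_{E_1}$ as a subtree attached to the $r_{E_1}$-to-$f$ path, and inside it there would be a degree-two vertex hosting an $E_2$-edge that does not lie on the root-to-root path of $F_1$ through $f$, contradicting the hypothesis. Hence $L$ is the path $P_L$ with monochromatic subtrees attached; deleting the $E_2$-coloured ones (and suppressing degree-two vertices) recovers $L_{E_1}$, deleting the $E_1$-coloured ones recovers $L_{E_2}$, and the left leaf of $f$ is the common bottom end of $P_L$. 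With this picture the interval condition for $\pi$ is checked by cases on the position of a vertex $v$ of $L$: if $v$ lies in an $E_1$-coloured hanger, the left leaves below $v$ form an interval of $\pi_1$ by $L_{E_1}$-compatibility, hence of $\pi$ since $\pi_1$ is a prefix of $\pi$; symmetrically for $v$ in an $E_2$-coloured hanger, using that $\pi_2\setminus\{f\}$ is a suffix of $\pi$ and $f$ is extreme in $\pi_2$; and if $v$ lies on $P_L$, the left leaves below $v$ consist of a suffix of $\pi_1\setminus\{f\}$ (the $E_1$-coloured ones, since $\pi_1$ ends with $f$), then the left leaf of $f$, then a prefix of $\pi_2\setminus\{f\}$ (the $E_2$-coloured ones, since $\pi_2$ begins with $f$), three consecutive blocks of $\pi$, so again an interval. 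The root and the leaves are trivial, and $R$ is handled identically.

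I expect the main obstacle to be the bookkeeping in the last step around the shared edge $f$ and the two induced trees: making precise that a vertex of $P_L$ maps onto the relevant root-to-$f$ path in both $L_{E_1}$ and $L_{E_2}$, so that the ``suffix, then $f$, then prefix'' description of the leaves below it is literally correct; verifying that the monochromaticity argument is not disturbed when the roots of $F$ differ from the roots $r_{E_i},\rho_{E_i}$ of the induced subtanglegrams; and disposing of the degenerate cases $|E_1|=1$ or $|E_2|=1$, in which $F$ simply equals $F_2$ or $F_1$ and there is nothing to prove.
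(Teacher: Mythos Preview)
Your proof is correct and follows the same underlying idea as the paper's---glue the two planar pieces along the root-to-root path through $f$---but you carry it out combinatorially rather than geometrically. The paper simply draws $F_1$ with its $r^1$--$\rho^1$ path through $f$ on a horizontal line and everything else above, draws $F_2$ with its corresponding path on the same line and everything else below, observes that both paths are obtained from the full path $P$ in $F$ by suppressing vertices, and declares that the two pictures merge; the scar hypothesis is used only implicitly, to guarantee that the suppressed vertices on each side are exactly the attachment points for the other piece. Your version replaces the picture by the linear-order characterisation of planarity, and your ``monochromatic hangers'' claim makes explicit precisely the structural fact the paper's one-line merge takes for granted. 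What the paper's argument buys is brevity; what yours buys is that the verification of compatibility is actually written down rather than left to Figure~\ref{fig:lem_zip}, and your interval check in the three cases (vertex in an $E_1$-hanger, in an $E_2$-hanger, on $P_L$) is exactly the content hidden in the paper's phrase ``these two layouts can be merged''.
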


\begin{proof} Let the left and right roots of $F$ be $r$ and $\rho$ respectively, and let $P$ be the unique $r$-$\rho$ path in $F$ containing
$f$ but no other matching edges, and let $r^1,\rho^1$ and $r^2,\rho^2$ be the left and right roots of $F_1$ and $F_2$ respectively. From the assumptions on $f$ we get that $r^1,r^2,\rho^2,\rho^2$ lie on $P$. 

The conditions on $F_1$ and $F_2$ imply that $F_1$ has a planar layout such that the $r^1$-$\rho^1$ path $P_1$ containing $f$ lies on a straight line,
 all other edges of $F_1$ lie above this line and $e_1$ is on the boundary of the infinite face; also,
 $F_2$ has a planar layout such that the $r^2$-$\rho^2$ path $P_2$ containing $f$ lies on a straight line,
 all other edges of $F_2$ lie below this line and $e_2$ is on the boundary of the infinite face. Since the order of vertices on $P$
 is independent of the drawings, $P_1$ and $P_2$ can be obtained from subpaths of $P$ by suppressing some vertices, so these two
layouts can be merged into the required planar layout of $F$; see Figure~\ref{fig:lem_zip} for an illustration of this lemma.
\end{proof}

\begin{figure}[htbp]
\begin{center}
\begin{tikzpicture}[scale=0.7,line/.style={-}]

 	\draw[gray!20, fill=gray!20] (2,-4)--(3,-4) --(3,4)-- (2,4) --(2,-4);

	\draw[dashed] (2,0)--(3,0);
	\draw[dashed] (2,-4)--(3,-4);
	\draw[dashed] (2,4)--(3,4);

	\draw[dotted] (2,1)--(3,1);
	\draw[dotted] (2,2)--(3,2);
	\draw[dotted] (2,3)--(3,3);

	\draw[dotted] (2,-1)--(3,-1);
	\draw[dotted] (2,-2)--(3,-2);
	\draw[dotted] (2,-3)--(3,-3);

         \node at (2.5,4.2) {$e_1$};
         \node at (2.5,0.2) {$f$};
         \node at (2.5,-4.3) {$e_2$};

         \node at (2.5,-2.2) {$E_2$};
         \node at (2.5,2.2) {$E_1$};

   	\path[black,line,line width=2pt] (-2,0) edge (2,0);
   	\path[black,line,line width=2pt] (3,0) edge (7,0);
	\path[black,line,line width=2pt,in=-170, out=30] (-2,0) edge (2,4);
	\path[black,line,line width=2pt,in=160, out=-70] (-1,0) edge (2,-4);
	\path[black,line,line width=2pt,in=20, out=-160] (7,0) edge (3,-4);
	\path[black,line,line width=2pt,in=-10, out=130] (6.5,0) edge (3,4);
        
\end{tikzpicture}
\end{center}
\caption{Illustration of Lemma~\ref{lem:zip}.}\label{fig:lem_zip}
\end{figure}
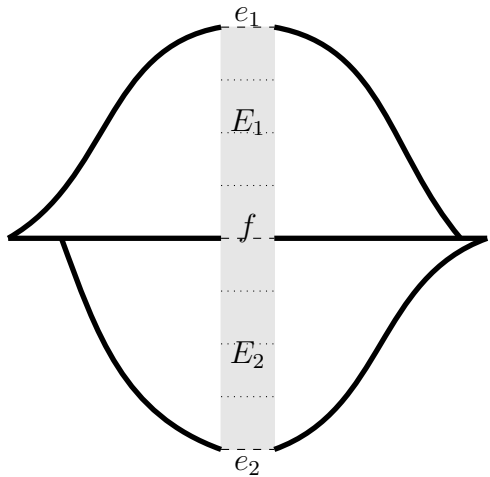

\section{Crossing-critical tanglegrams}

Another key concept in this paper is that of a {\em crossing-critical tanglegram}. A tanglegram is crossing-critical if it is non-planar, but every proper induced subtanglegram of it is planar. For example, tanglegrams No.~6 and No.~13 are crossing-critical. Clearly any non-planar
tanglegram contains a crossing-critical induced subtanglegram.

\begin{theorem} \label{thm:main}
The only crossing-critical tanglegrams are No.~6 and No.~13. Therefore, every non-planar tanglegram contains No.~6 or No.~13. as
an induced subtanglegram. 
\end{theorem}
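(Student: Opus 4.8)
The plan is to reduce the theorem to the statement that \emph{no crossing-critical tanglegram has size at least $5$}, and then to dispose of the small sizes by hand. The two assertions in the theorem are equivalent once we know No.~$6$ and No.~$13$ are crossing-critical: every non-planar tanglegram contains a crossing-critical induced subtanglegram, so if the only such are No.~$6$ and No.~$13$ then every non-planar tanglegram contains one of them; and conversely No.~$6$ and No.~$13$ are indeed crossing-critical, being non-planar by Figure~\ref{fig:K3,3} and Lemma~\ref{lem:crt} while all their proper induced subtanglegrams have size at most $3$ and are therefore planar (since $p_3=t_3=2$). For the small sizes: every tanglegram of size at most $3$ is planar, so a crossing-critical tanglegram has size at least $4$, and a size-$4$ tanglegram is crossing-critical exactly when it is non-planar (its proper induced subtanglegrams again having size at most $3$). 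Inspecting the $13$ tanglegrams of size $4$ in Figure~\ref{fig:t4} — drawing the eleven planar ones without crossings, as done for No.~$2$ in Figure~\ref{fig:re-drawn} — shows that only No.~$6$ and No.~$13$ are non-planar. It thus remains to rule out crossing-critical tanglegrams of size $n\ge 5$.

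Suppose for contradiction that $T$ is crossing-critical of size $n\ge 5$, with left tree $L$ (root $r$), right tree $R$ (root $\rho$), and matching $\sigma$. The basic resources are: for every matching edge $f\in\sigma$ the subtanglegram $T\setminus f$ induced by $\sigma\setminus\{f\}$ is proper, hence planar; and for every partition $\sigma=E_1\cup E_2$ with $E_1\cap E_2=\{f\}$ and $1\le |E_1\setminus\{f\}|\le n-2$ the induced subtanglegrams $F_1$ (on $E_1$) and $F_2$ (on $E_2$) are proper, hence planar. The goal is to use these planar pieces, together with Lemmas~\ref{lem:zip} and~\ref{lem:kukac}, either to build a crossing-free layout of $T$ (contradicting that $T$ is non-planar) or to exhibit a non-planar induced subtanglegram of $T$ on at most four leaves (contradicting crossing-criticality, as $n>4$; by the base case such a subtanglegram would in fact be No.~$6$ or No.~$13$, but its mere existence already suffices).

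Fix $f=\ell\ell'\in\sigma$ with $\ell$ in a cherry of $L$, so that the scar $e_L$ of $f$ in $T\setminus f$ is a pendant edge on the left (the degenerate cases in which $\ell$ or $\ell'$ is a child of a root are easier and are set aside); let $e_R$ be the scar of $f$ on the right. Re-inserting $f$ into a planar layout of $T\setminus f$ without a crossing is possible precisely when that layout has a face bordered by both $e_L$ and $e_R$ in positions compatible with the tree orders, and since $T$ is non-planar no planar layout of $T\setminus f$ has such a face. The proof then runs a case analysis on which faces $e_L$ and $e_R$ can be made to border. By cutting $T\setminus f$ along the tree structure one can realize any prescribed face as the outer face of a proper (hence planar) subtanglegram, so these questions become questions about whether a prescribed non-matching edge can be pushed onto the boundary of the infinite face — exactly the hypothesis of Lemma~\ref{lem:kukac}. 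In the favourable branches Lemma~\ref{lem:kukac} (in contrapositive form) supplies planar layouts of pieces $F_1$ and $F_2$ in which $f$ lies on the outer boundary, and then Lemma~\ref{lem:zip} — whose scar hypothesis is arranged by the choice of $E_1,E_2$ — glues them into a crossing-free layout of $T$. In the unfavourable branch, Lemma~\ref{lem:kukac} applied to $T\setminus f$ with marked edge $e_L$ produces three matching edges $g_1,g_2,g_3$ inducing a copy of $S$ along whose bold edge $e_L$ runs; reattaching $\ell$ and $\ell'$ along that bold edge shows the subtanglegram of $T$ induced by $\{f,g_1,g_2,g_3\}$ to be non-planar (a copy of No.~$6$ or No.~$13$) and proper, the required contradiction. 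The symmetric branch with $e_R$ in place of $e_L$, and the mixed situation where neither scar is globally hidden but no single face exposes both, are handled by the same ideas applied on both sides.

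I expect the case analysis of the third paragraph to be the main obstacle: making precise the translation from ``re-insertability of $f$'' to outer-face incidences in the cut pieces, and then, branch by branch, either producing the Lemma~\ref{lem:zip} decomposition with its scar hypothesis genuinely met or identifying the forbidden four-leaf subtanglegram. The delicate point should be the mixed case, where $e_L$ can be pushed onto some outer boundary and so can $e_R$, but never in the same layout: there one wants to apply Lemma~\ref{lem:kukac} to both sides of a common splitting edge and then invoke Lemma~\ref{lem:zip}, and the crux is to choose the matching edges returned by these appeals so that the resulting non-planar witness still lives on at most four leaves.
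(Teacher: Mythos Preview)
Your proposal has a genuine gap in the ``unfavourable branch''. Applying Lemma~\ref{lem:kukac} to $T\setminus f$ with marked edge $e_L$ pins the \emph{left} scar of $f$ onto a bold path of the copy of $S$ on $\{g_1,g_2,g_3\}$, but says nothing about the right scar $e_R$. When you then pass to the subtanglegram of $T$ induced by $\{f,g_1,g_2,g_3\}$, the right endpoint $\ell'$ may land anywhere relative to the right tree of $S$, and for several of those positions the resulting size-$4$ tanglegram is planar --- for instance, if $\ell'$ happens to sit on the right bold path too (so that $\ell$ and $\ell'$ each become siblings of the middle leaf on their respective sides), the four matching edges lay out with the identity permutation and no crossing at all. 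So ``reattaching $\ell$ and $\ell'$ along that bold edge shows the subtanglegram \dots\ to be non-planar'' is not justified: Lemma~\ref{lem:kukac} controls only one side. The ``mixed case'' you flag at the end is a symptom of the same difficulty, and the scar hypothesis of Lemma~\ref{lem:zip} in your ``favourable branches'' is likewise asserted rather than arranged.

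The paper avoids this by organising the argument around the root split rather than around a cherry deletion. Writing $L_u,L_d,R_u,R_d$ for the subtrees hanging from the children of $r$ and $\rho$ and $E_u,E_d,E_m$ for the matching edges sorted accordingly, two easy observations dispose of the extremes (if $E_m=\emptyset$ then $T$ is planar; if $E_m$ contains edges going both ways one reads off a copy of No.~$13$ on four edges), and when $\min(|E_u|,|E_d|)\ge 2$ two applications of Lemma~\ref{lem:zip} --- whose scar hypothesis is now automatic from the root split --- force $T$ to be planar. This leaves only $|E_d|=1$ (say), and here the single edge $e\in E_d$ has its left endpoint equal to the child $r^d$ of $r$. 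That is the point you are missing: when $e$ is deleted and later reinserted, the \emph{left} attachment is forced to be at the root itself, so only the right side needs to be controlled, and Lemma~\ref{lem:kukac} (applied with the \emph{right} scar of $e$ as marked edge) does exactly that. With the right scar on a bold edge of $S$ and the left attachment at $r$, the induced size-$4$ tanglegram is exactly No.~$6$. Ironically, the ``degenerate'' situation you set aside --- an endpoint adjacent to a root --- is precisely the configuration that makes the argument go through.
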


\begin{corollary} \label{cor:t4remaining}
The remaining eleven tanglegrams of size 4 in Figure~\ref{fig:t4} are planar.
\end{corollary}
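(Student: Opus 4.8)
The plan is a direct, finite verification: I will exhibit a crossing-free layout for each of the eleven tanglegrams No.~1--5 and No.~7--12, so that planarity is witnessed explicitly. Since the switch and mirror operations realize precisely the relabelings of leaves that are compatible with the two tree shapes, producing a planar layout amounts to choosing, at each internal vertex of the left and right trees, which child block is placed on top, in such a way that the resulting matching is order-preserving between the two vertical leaf orders, hence has no crossing segments. Figure~\ref{fig:re-drawn} already carries this out for No.~2, and I would treat the remaining ten the same way, collecting the layouts into a single figure in the style of Figure~\ref{fig:re-drawn}.

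To make the check manifestly exhaustive, I would exploit that a rooted binary tree on four leaves has only two unordered shapes: the balanced tree (two cherries joined at the root) and the caterpillar. Consequently each size-$4$ tanglegram is determined, up to switches and mirrors, by the ordered pair of left/right shapes together with the matching, which is exactly the data encoded by the thirteen pictures of Figure~\ref{fig:t4}. For each of the eleven tanglegrams I would record an explicit top/bottom assignment at every internal vertex that makes the matching monotone; because there are at most three internal vertices per side and only four matching edges, each such assignment can be read off in a single step, and verifying that no two matching segments cross is immediate.

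As an independent cross-check I would also route the argument through Lemma~\ref{lem:crt}, which says that $T$ is planar exactly when $T^*$ contains no subdivision of $K_{3,3}$. In a size-$4$ augmented graph the only vertices of degree at least $3$ are the six internal vertices, three per tree: each non-root internal vertex has its parent edge and two child edges, while each root gains a third incidence from the added edge $r\rho$, and every leaf has degree $2$ (one tree edge and one matching edge). Any $K_{3,3}$-subdivision must therefore use precisely these six vertices as its branch vertices, joined by nine internally disjoint paths running through the eight degree-$2$ leaves. For each of the eleven tanglegrams one checks that the nine disjoint connections cannot be simultaneously realized, whereas for No.~6 and No.~13 they can, as exhibited in Figure~\ref{fig:K3,3}.

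The only genuine obstacle is bookkeeping: confirming that the eleven cases together with No.~6 and No.~13 exhaust the list of Figure~\ref{fig:t4}, and that each displayed layout is truly crossing-free. Since size $4$ is so small, every individual case collapses to an immediate inspection once the top/bottom choices at the (at most six) internal vertices are fixed, so the construction is routine rather than delicate; the content of the corollary is exactly that this direct untangling succeeds for all thirteen size-$4$ tanglegrams except the two displayed in Figure~\ref{fig:K3,3}.
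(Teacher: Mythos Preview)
Your approach is correct but takes a different route from the paper. In the paper, Corollary~\ref{cor:t4remaining} is stated immediately after Theorem~\ref{thm:main} and is left without an explicit proof because it is an instant consequence of that theorem: if one of the remaining eleven size-$4$ tanglegrams were non-planar, it would contain No.~6 or No.~13 as an induced subtanglegram; since an induced subtanglegram of a size-$4$ tanglegram has size at most $4$, the containment would have to be equality, contradicting that the tanglegram is one of the other eleven. No case analysis is performed.

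Your proposal, by contrast, is a direct finite verification: exhibit a crossing-free layout for each of the eleven (and, as a cross-check, argue via Lemma~\ref{lem:crt} that $T^*$ has no $K_{3,3}$-subdivision). This is entirely valid and more elementary in the sense that it is self-contained and does not invoke Theorem~\ref{thm:main}; the paper itself already gestures at this route with Figure~\ref{fig:re-drawn}. The trade-off is that you perform eleven small checks where the paper performs none, and you forgo the point the paper is making, namely that the planarity of the eleven falls out for free once the Kuratowski-type classification of crossing-critical tanglegrams is established. Your Lemma~\ref{lem:crt} cross-check is also sound (the degree count forcing the six branch vertices to be exactly the internal vertices is correct), though it is strictly more work than simply drawing the layouts.
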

\begin{corollary}
For every non-planar tanglegram $T$, the augmented graph $T^*$ contains a subdivision of $K_{3,3}$, where three of the original vertices of the
$K_{3,3}$ are located in $L$ and the other three in $R$.
\end{corollary}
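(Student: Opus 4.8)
The plan is to derive the statement from Theorem~\ref{thm:main} by lifting the explicit $K_{3,3}$-subdivisions of Figure~\ref{fig:K3,3} along the induced-subtanglegram relation. First I would apply Theorem~\ref{thm:main} to obtain an induced subtanglegram $T'$ of $T$ that equals tanglegram No.~6 or No.~13; say $T'$ is induced by the matching edge set $E$, so that its left and right trees are the suppressed versions of $L_E\subseteq L$ and $R_E\subseteq R$, with roots $r_E$ and $\rho_E$ (the vertices of $L_E$ and $R_E$ closest to $r$ and $\rho$, respectively). Next I would record from Figure~\ref{fig:K3,3} the one feature that matters: in the augmented graph of each of No.~6 and No.~13, the displayed subdivision of $K_{3,3}$ has exactly three branch vertices in the left tree and three in the right tree (a finite check), and it uses the augmenting root-to-root edge.

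The core of the argument is then a lifting step: if $T'$ is an induced subtanglegram of $T$ and the augmented graph $(T')^*$ contains a subdivision of $K_{3,3}$ with three branch vertices in each of the two trees, then $T^*$ does too. To see this I would transport the subdivision vertex by vertex from $(T')^*$ to $T^*$: each vertex of $L_E$ (resp.\ $R_E$) goes to the corresponding vertex of $L$ (resp.\ $R$), each matching edge of $E$ is kept, and each edge of the suppressed tree of $T'$ is expanded back into the path of $L$ (resp.\ $R$) it arose from. The only edge of $(T')^*$ not covered by this dictionary is its augmenting edge $r_E\rho_E$; I would replace it by the path in $T^*$ that runs from $r_E$ up the tree $L$ to the root $r$, then along the augmenting edge $r\rho$ of $T^*$, and then down $R$ from $\rho$ to $\rho_E$. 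By the minimality defining $r_E$, the $r$-to-$r_E$ segment meets $L_E$ only in $r_E$, and symmetrically for $R$; hence the interior of this replacement path lies in $(L\setminus L_E)\cup\{r,\rho\}\cup(R\setminus R_E)$ and is disjoint from the images of $L_E$, $R_E$ and the matching edges. Thus the result is a genuine subdivision of $K_{3,3}$ in $T^*$, and its branch vertices are exactly the images of the old ones, so three of them lie in $L$ and three in $R$.

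I do not anticipate a real obstacle: the only point needing care is the internal disjointness of the replacement path from the rest of the subdivision, which is precisely what the definitions of $r_E$ and $\rho_E$ guarantee; one should also note the degenerate case $r_E=r$ (or $\rho_E=\rho$), where the corresponding segment of the replacement path is empty and there is nothing to verify. If one prefers not to rely on the subdivision using the augmenting edge, the same mapping works verbatim even when $r_E\rho_E$ is unused, since then the subdivision already lies inside $T'\subseteq T\subseteq T^*$.
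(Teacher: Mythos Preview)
Your argument is correct and is exactly the reasoning the paper leaves implicit: the corollary is stated without proof as an immediate consequence of Theorem~\ref{thm:main} together with the explicit $K_{3,3}$-subdivisions displayed in Figure~\ref{fig:K3,3}, and your lifting step (expanding suppressed edges back to tree paths and replacing $r_E\rho_E$ by the path $r_E\!-\!r\!-\!\rho\!-\!\rho_E$) is precisely how one passes from $(T')^*$ to $T^*$. Your care about the internal disjointness of the replacement path, guaranteed by the definition of $r_E$ and $\rho_E$, and your handling of the degenerate cases $r_E=r$ or $\rho_E=\rho$, are the only points that needed attention, and you have them right.
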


It would be interesting to see if a more general theorem holds for tanglegrams exhibiting an even higher degree of non-planarity:

\begin{question} For an integer $k\geq 3$, is there a characterization of tanglegrams that have $k$ pairwise crossing matching edges
in every layout, in terms of a finite list of tanglegrams that they must have as induced subtanglegrams, analogous to Theorem~\ref{thm:main}? 
\end{question}

\begin{proof}[Proof of Theorem~\ref{thm:main}]
Assume that $T$ is a crossing-critical tanglegram, with left subtree $L$ rooted at $r$ and right subtree $R$ rooted at $\rho$.
Let $L_u,L_d$ be the rooted subtrees of $L$ rooted at the neighbors $r^u$ and $r^d$ of $r$ and $R_u,R_d$ be the rooted subtrees of $R$ rooted at the neighbors 
$\rho^u$ and $\rho^d$ of $\rho$. 
Since the leaves of both $L_u$ and $L_d$ are matched to the leaves of at least one of $R_u$ and $R_d$, and vice versa, we may assume without loss of generality that  there are matching edges between $L_u$ and $R_u$, and between $L_d$ and $R_d$. (If  this is not the case, we can achieve 
this situation using switch operations.)

Denote the non-empty set of matching edges
between $L_u$ and $R_u$ by $E_u$, and between $L_d$ and $R_d$ by $E_d$, and let $E_m$ be the (potentially empty) set of matching edges not in $E_u\cup E_d$.

Let $T_u$ and $T_d$ be the subtanglegrams of $T$ induced by the matching edges $E_u$ and $E_d$, respectively. Since $T$ is 
crossing-critical, both $T_u$ and $T_d$ 
are planar tanglegrams.

If $E_m=\emptyset$ (part (a) in Figure~\ref{fig:triple}) then $T$ has a planar layout (just put the planar layouts of $T_u$ and $T_d$ above each other, and connect the
 vertex $r$ to the left roots of $T_u$ and $T_d$, and $\rho$ to the right roots of $T_u$ and $T_d$), which is a contradiction. Therefore $E_m\ne\emptyset$.

If $E_m$ contains a matching edge $g$ between $L_u$ and $R_d$ and a matching edge $f$ between $L_d$ and $R_u$ (part (b) in Figure~\ref{fig:triple}), then let $e\in E_u$ and
$h\in E_d$. The subtanglegram induced by the edges $e,f,g,h$ in $T$ is No. 13, and we are done.
So we are left to consider the case when only one of the pairs $L_u,R_d$ and $L_d,R_u$ has matching edges between them, in this case
without loss of generality (using the
mirror image operation at $r$ and $\rho$, if needed) $E_m$ is the non-empty set of matching edges between $L_u$ and $R_d$ (part (c) in Figure~\ref{fig:triple}).

\begin{figure}[htbp]
\begin{center}
\begin{tikzpicture}[scale=0.55]
	\draw[gray!20, fill=gray!20]    (2.5,2) --(5,2) --(5,4) -- (2.5,4) --(2.5,2);	
	\draw[gray!20, fill=gray!20]    (2.5,-2) --(5,-2) --(5,-4) -- (2.5,-4) --(2.5,-2);		
        \node[fill=black,circle,inner sep=2pt]  at (0.5,0) {};
        \node[fill=black,circle,inner sep=2pt]  at (7,0) {};
	\node at (0,0) {$r$};
	\node at (7.5,0) {$\rho$};
	\node at  (2,1.8) {$L_u$};
	\node at (2,-1.8) {$L_d$};
	\node at (5.5,1.8) {$R_u$};
	\node at (5.5,-1.8) {$R_d$};
	\draw[fill=black] (2,3)--(2.5,2)--(2.5,4);
	\draw (2.5,4)--(0.5,0)--(2.5,-4);
	\draw[fill=black] (2.5,-4)--(2.5,-2)--(2,-3);
	\draw[fill=black] (5.5,3)--(5,2)--(5,4);
	\draw (5,4)--(7,0)--(5,-4);
	\draw[fill=black] (5,-4)--(5,-2)--(5.5,-3);
	\node at (0,-2) {$L$};
	\node at (7.5,-2) {$R$};
	\node at (3.75,-5) {($a$)};
	\draw[thick,dashed] (2.5,3)--(5,3);
	\draw[thick,dashed] (2.5,-3)--(5,-3);

	\draw[gray!20, fill=gray!20]    (12,2) --(14.5,2) --(14.5,4) -- (12,4) --(12,2);	
	\draw[gray!20, fill=gray!20]    (12,-2) --(14.5,-2) --(14.5,-4) -- (12,-4) --(12,-2);	
	\draw[gray!20, fill=gray!20]    (12,2) --(14.5,-4) --(14.5,-2) -- (12,4) --(12,2);	
	\draw[gray!20, fill=gray!20]    (12,-2) --(14.5,4) --(14.5,2) -- (12,-4) --(12,-2);
	\draw[gray!40,fill=gray!40]    (12,4)--(12.833,2)--(12,2);	
	\draw[gray!40,fill=gray!40]    (12,-4)--(12.833,-2)--(12,-2);	
	\draw[gray!40,fill=gray!40]    (14.5,4)--(13.677,2)--(14.5,2);	
	\draw[gray!40,fill=gray!40]    (14.55,-4)--(13.677,-2)--(14.5,-2);				
        \node[fill=black,circle,inner sep=2pt]  at (10,0) {};
        \node[fill=black,circle,inner sep=2pt]  at (16.5,0) {};
	\node at (9.5,0) {$r$};
	\node at (17,0) {$\rho$};
	\node at  (11.5,1.8) {$L_u$};
	\node at (11.5,-1.8) {$L_d$};
	\node at (15,1.8) {$R_u$};
	\node at (15,-1.8) {$R_d$};
	\node at (13.25,3.4) {$e$};
	\node at (13.25,-3.5) {$h$};
	\node at (12.7,1.8) {$g$};
	\node at (12.7,-1.7) {$f$};
	\draw[fill=black] (11.5,3)--(12,2)--(12,4);
	\draw (12,4)--(10,0)--(12,-4);
	\draw[fill=black] (12,-4)--(12,-2)--(11.5,-3);
	\draw[fill=black] (15,3)--(14.5,2)--(14.5,4);
	\draw (14.5,4)--(16.5,0)--(14.5,-4);
	\draw[fill=black] (14.5,-4)--(14.5,-2)--(15,-3);
	\draw[thick,dashed] (12,3)--(14.5,3);
	\draw[thick,dashed] (12,-3)--(14.5,-3);
	\draw[thick,dashed] (12,2.5)--(14.5,-2.5);
	\draw[thick,dashed] (12,-2.5)--(14.5,2.5);
	\node at (9.5,-2) {$L$};
	\node at (17,-2) {$R$};
	\node at (13.25,-5) {($b$)};

	\draw[gray!20, fill=gray!20]    (21.5,2) --(24,2) --(24,4) -- (21.5,4) --(21.5,2);	
	\draw[gray!20, fill=gray!20]    (21.5,-2) --(24,-2) --(24,-4) -- (21.5,-4) --(21.5,-2);	
	\draw[gray!20, fill=gray!20]    (21.5,2) --(24,-4) --(24,-2) -- (21.5,4) --(21.5,2);	
	\draw[gray!40,fill=gray!40]    (21.5,4)--(22.333,2)--(21.5,2);	
	\draw[gray!40,fill=gray!40]    (24,-4)--(23.177,-2)--(24,-2);				
        \node[fill=black,circle,inner sep=2pt]  at (19.5,0) {};
        \node[fill=black,circle,inner sep=2pt]  at (26,0) {};
	\node at (19,0) {$r$};
	\node at (26.5,0) {$\rho$};
	\node at  (21,1.8) {$L_u$};
	\node at (21,-1.8) {$L_d$};
	\node at (24.5,1.8) {$R_u$};
	\node at (24.5,-1.8) {$R_d$};
	\draw[fill=black] (21,3)--(21.5,2)--(21.5,4);
	\draw (21.5,4)--(19.5,0)--(21.5,-4);
	\draw[fill=black] (21.5,-4)--(21.5,-2)--(21,-3);
	\draw[fill=black] (24.5,3)--(24,2)--(24,4);
	\draw (24,4)--(26,0)--(24,-4);
	\draw[fill=black] (24,-4)--(24,-2)--(24.5,-3);
	\node at (19,-2) {$L$};
	\node at (26.5,-2) {$R$};
	\node at (22.75,-5) {($c$)};
	\draw[thick,dashed] (21.5,3)--(24,3);
	\draw[thick,dashed] (21.5,-3)--(24,-3);
	\draw[thick,dashed] (21.5,2.5)--(24,-2.5);

\end{tikzpicture}
\end{center}
\caption{Case analysis on the qualitative distribution of matching edges between subtrees of $L$ and $R$. Dashed lines mark the existence of matching edges between the subtrees.}\label{fig:triple}
\end{figure}
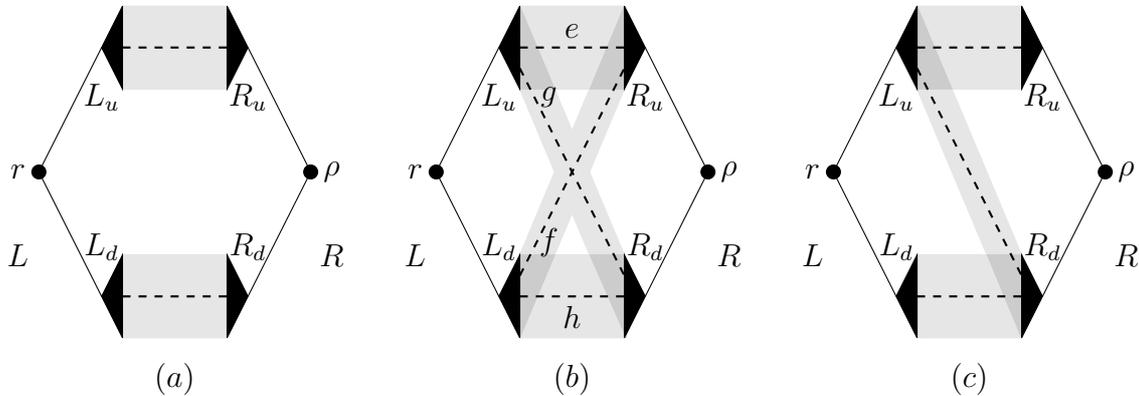

We will consider two cases:

Case (A): $\min(|E_u|,|E_d|)\ge 2$. We are going to show that this does not happen in a crossing-critical tanglegram $T$.\\
Let $e\in E_d$ and consider the subtanglegram $T'$ induced by all matching edges except $e$ with left tree $L'$ and right tree $R'$. 
$T'$ is planar, contains $T_u$ as a subtanglegram, and contains the vertices $\rho^u$ and $\rho^d$. As the unique path from the root to a matching edge in $R'$ passes through $\rho^u$ for every matching edge in $E_u$ and passes through $\rho^d$ for every matching edge in $E_d\cup E_m$, in any planar layout of $T'$
the edges of $E_u$ appear contiguously, and the edges of $E_m$ appear  on only one side of them.
Consequently, any planar layout of $T'$ gives a planar sublayout of $T_u$ where
all scars from $E_m$ lie on the same root-to-root path bordering the infinite face; denote the matching edge
which this path travels through by $f_u$. Similar logic gives that
$T_d$ has a planar layout in which all scars lie on the same root-to-root path bordering the infinite face, denote the
matching edge which this path travels through by $f_d$. Let $T''$ be the tanglegram induced by $E_m\cup\{f_u,f_d\}$. 
Consider a planar layout of $T''$ (as $T$ is crossing-critical, such a layout exists), without loss of generality
(up to a mirror operation) $f_u$ lies above $f_d$ in this layout. Let $P_u$ be the unique shortest path leading from $r$ to
$f_u$ and $P_d$ be the unique shortest path leading from $\rho$ to $f_d$, and let $g\in E_m$ be arbitrary.
Consider the vertical strip between the two vertical lines going though $r$ and $\rho$ -- this is the region where $T''$ is drawn.
The $r$-$\rho$ paths containing $f_u$ and $f_d$ and no other matching edge
cut this strip into three subregions,
and $g$ must lie in the unique subregion that borders both $P_u$ and $P_d$. That means $g$ lies between 
$f_u$ and $f_d$ in this planar layout of $T''$, and consequently
 in any planar layout of $T''$, $f_u$ and $f_d$ are on the boundary of the infinite face.
Two applications of Lemma~\ref{lem:zip} (first on $T''$ and $T_u$ using the common edge $f_u$, then on the resulting 
tanglegram and $T_d$ using the common edge $f_d$)
show that $T$ itself has a planar layout, which is a contradiction. 

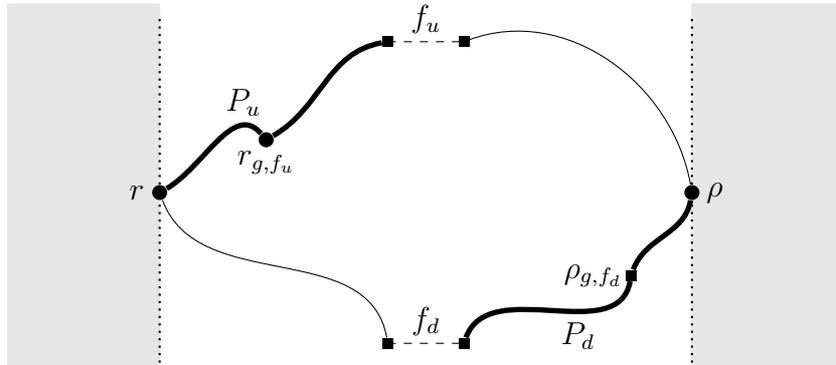
\begin{figure}[htbp]
\begin{center}
\begin{tikzpicture}[line/.style={-}]
	\draw[gray!20,fill=gray!20] (-2,2.5)--(0,2.5)--(0,-2.3)--(-2,-2.3)--(-2,2.5);
	\draw[gray!20,fill=gray!20] (9,2.5)--(7,2.5)--(7,-2.3)--(9,-2.3)--(9,2.5);	
        \node(A)[fill=black,circle,inner sep=2pt]  at (0,0) {};
        \node(E)[fill=black,circle,inner sep=2pt]  at (7,0) {};
        \node(B)[fill=black,rectangle, inner sep=2pt] at (3,2) {};
        \node(F)[fill=black, rectangle, inner sep=2pt] at (4,2) {};
        \node(C)[fill=black,rectangle, inner sep=2pt] at (3,-2) {};
        \node(G)[fill=black, rectangle, inner sep=2pt] at (4,-2) {};
        \node(D)[fill=black,circle,inner sep=2pt]  at (1.4,.7) {};
        \node(H)[fill=black, rectangle, inner sep=2pt] at (6.2,-1.1) {};        
        \draw[dashed] (3,2)--(4,2);
        \draw[dashed] (3,-2)--(4,-2);
        \path[line width=2pt,in=130,out=30] (A) edge (D);      
        \path[line width=2pt,in=-170,out=30] (D) edge (B);
        \path[line width=2pt,in=70,out=-100] (E) edge (H);      
        \path[line width=2pt,in=70,out=-100] (H) edge (G);
       \path[in=100,out=-70] (A) edge (C);
       \path[in=-700,out=100] (E) edge (F);
      
        \draw[thick,dotted] (0,2.3)--(0,-2.3);      
        \draw[thick,dotted] (7,2.3)--(7,-2.3);  
        \node at (-.3,0) {$r$};
        \node at (7.3,0) {$\rho$};  
        \node at (1.4,.4) {$r_{g,f_u}$};  
        \node at (5.7,.-1.1) {$\rho_{g,f_d}$};  
        \node at (3.5,2.3) {$f_u$};  
        \node at (3.5,-1.7) {$f_d$};  
        \node at (1.1,1.2) {$P_u$};  
        \node at (5.5,-1.9) {$P_d$};  
\end{tikzpicture}
\end{center}
\caption{Analysis of a planar drawing of the subtanglegram $T''$. The white region between the dotted vertical
lines is where $T''$ is drawn.}\label{fig:caseA}
\end{figure}

Case (B): $\min(|E_u|,|E_d|)=1$. We are going to exhibit No. 6 as an induced subtanglegram in $T$.\\
Assume first that $|E_d|=1$, and let $e$ be the single matching edge between $L_d$ and $R_d$.
This means in particular that $L_d$ consists of a  single leaf vertex $r^d$ that is matched by the edge $e$.
Now, by the crossing-criticality of $T$, the subtanglegram induced by all matching edges but $e$, denoted by
$\hat T$, is planar, and a non-matching  edge of  its right subtree, $\hat R$, has a scar marking $e$ (this scar exists, as $R_d$ has leaves
matched by $E_m$ and therefore $\rho_{\sigma\setminus\{e\}}=\rho$).  
If $\hat{T}$ has a planar layout in which the marked edge is on the boundary of the infinite face, then $T$ has a planar layout, contradicting the crossing-criticality of $T$. Therefore
in all planar layouts of $\hat{T}$, the marked edge is not on the boundary of the infinite face. Lemma~\ref{lem:kukac} shows that $\hat{T}$ contains the subtanglegram $S$ with the mark $m$ positioned as in Figure~\ref{fig:megvan},
and, using the fact  that $r$ is connected to one of the endpoints of $e$, we find  that the subtanglegram induced by the edges
$a,b,c,e$ is No. 6, so we are done. If $|E_u|=1$, the argument is essentially the same after exchanging the roles of $L$ and $R$.
\end{proof} 

\begin{figure}[htbp]
\begin{center}
\begin{tikzpicture}[scale=.7]
\node at (0,0) {$r$};
\node[fill=black,circle,inner sep=2pt]  at (0.5,0) {};
\node[fill=black,circle,inner sep=1.5pt] at (1.5,.5) {};
\draw (1.5,0.5)--(0.5,0)--(3.5,-1.5);
\draw[line width=1.5pt] (3.5,1.5)--(1.5,0.5)--(3.5,-0.5); 
\draw[line width=1.5pt] (2.5,1)--(3.5,0.5);
\draw[dashed,line width=1.5pt] (3.5,1.5)--(5.5,1.5);
\draw[dashed,line width=1.5pt] (3.5,-0.5)--(5.5,-1.5);
\draw[dashed] (3.5,-1.5)--(5.5,-.5);
\draw[dashed,line width=1.5pt] (3.5,.5)--(5.5,.5);
\node at (9.1,0.1) {$\rho_{abc}$};
\node at (1,.8) {$r_{abc}$};
\draw[line width=1.5pt] (5.5,1.5)--(8.5,0)--(5.5,-1.5);
\draw[line width=1.5pt] (5.5,.5)--(7.5,-.5);
\draw (5.5,-.5)--(6.5,0);
\node at (6.6,0.3) {$m$};
\node[fill=black,circle,inner sep=1pt]  at (6.5,0) {};
\node[fill=black,rectangle,inner sep=2pt]  at (3.5,1.5) {};
\node[fill=black,rectangle,inner sep=2pt]  at (3.5,.5) {};
\node[fill=black,rectangle,inner sep=2pt]  at (3.5,-.5) {};
\node[fill=black,rectangle,inner sep=2pt]  at (3.5,-1.5) {};
\node[fill=black,rectangle,inner sep=2pt]  at (5.5,1.5) {};
\node[fill=black,rectangle,inner sep=2pt]  at (5.5,.5) {};
\node[fill=black,rectangle,inner sep=2pt]  at (5.5,-.5) {};
\node[fill=black,rectangle,inner sep=2pt]  at (5.5,-1.5) {};
\node at (4.5,1.8) {$a$};
\node at (4.5,.8) {$b$};
\node at (4,-.4) {$c$};
\node at (4,-1.6) {$e$};
\node[fill=black,circle,inner sep=1.5pt]  at (8.5,0) {};

\end{tikzpicture}
\end{center}
\caption{Finding subtanglegram No. 6 in $T$ starting from a copy of $S$ drawn in bold.}\label{fig:megvan}
\end{figure}
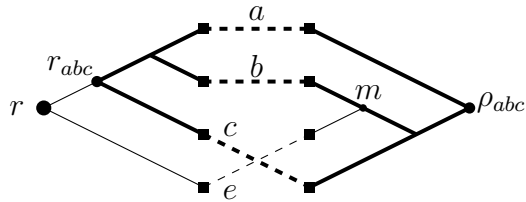


\end{document}

 \begin{figure}[htbp]
\begin{center}
\begin{tikzpicture}[line/.style={-}]
	\node at (-.15,0.9) {$r$};
	\node at (0,-.25) {$r_{bc}$};
	\node at (.4,.6) {$r_{ba}$};
	\node at (3.2,-.65) {$\rho_{bc}$};
	\node at (3.9,.3) {$\rho_{ba}$};
	\node at (4.2,-.35) {$\rho$};
	\node(L1)[fill=black,circle,inner sep=2.5] at (4.2,-.75) {};
	\node(R1)[fill=black,circle,inner sep=2.5pt] at (-.45,.75) {};
        \node(A1)[fill=black,circle,inner sep=2pt]  at (0,0) {};
        \node(B1)[fill=black,circle,inner sep=2pt]  at (3.75,0) {};        
        \node(C1)[fill=black,circle,inner sep=2pt]  at (.75,.375) {};
        \node(D1)[fill=black,circle,inner sep=2pt]  at (3,-.375) {};        
        \node(E1)[fill=black,rectangle,inner sep=2.5pt] at (1.5,.75) {};
        \node(F1)[fill=black,rectangle,inner sep=2.5pt] at (1.5,0) {};
        \node(G1)[fill=black,rectangle,inner sep=2.5pt] at (1.5,-.75) {};       
        \node(H1)[fill=black,rectangle,inner sep=2.5pt] at (2.25,.75) {};
        \node(J1)[fill=black,rectangle,inner sep=2.5pt] at (2.25,0) {};
        \node(K1)[fill=black,rectangle,inner sep=2.5pt] at (2.25,-.75) {};                 
      
        \path[black, line,out=-110,in=70] (E1) edge (C1);
        \path[black,line,in=-160,out=10] (A1) edge (G1);
         \path[black,line,line width=1pt,in=-110, out=50] (A1) edge (C1);
         \path[black,line,line width=1pt,in=-180=out=180] (C1) edge (F1);       
        \path[black, line,line width=1pt,in=50,out=-110] (B1)edge (D1); 
        \path[black,line,line width=1pt,in=50,out=110] (D1) edge (J1);
        \path[black,line,in=110,out=-70] (H1) edge (B1);        
       \path[black,line,in=50,out=-110] (D1) edge (K1);
       \path[black,line,line width=1pt,in=-50, out=150] (L1) edge (B1);
       \path[black,line,line width=1pt,in=-50, out=150] (A1) edge (R1);       
        \draw[dashed] (E1)--(H1);
        \draw[dashed,line width=1pt] (F1)--(J1);
        \draw[dashed] (G1)--(K1);
	\node at (1.875,.9) {$a$};
	\node at (1.875,0.25) {$b$};
	\node at  (1.875,-0.55) {$c$};
	\node at (1.875,-1.5) {vertex order $r_{bc}r_{ba}\rho_{bc}\rho_{ba}$};

\end{tikzpicture}
\end{center}
\caption{Schematic view of $F$ illustrating possible orders of the vertices $r_{ab},r_{ac},\rho_{ab},\rho_{ac}$.  Curved lines indicate
potentially longer path; the unique $r\rho$ path $P$ containing $b$ is bolded.}\label{fig:szembe}
\end{figure}